\newtheorem{assum}{Assumption}
\newtheorem{thm}{Theorem}
\newtheorem{rem}{Remark}
\newtheorem{lem}{Lemma}
\title{\LARGE \bf
Supervisory observer for parameter and state estimation of nonlinear systems using the DIRECT algorithm
}
\author{Michelle S. Chong, Romain Postoyan, Sei Zhen Khong and Dragan Ne\v{s}i\'{c}
\thanks{M. Chong is with the Department of Automatic Control, Lund University, Sweden. Her work is supported by the LCCC Linnaeus Center and the ELLIIT Excellence Center at Lund University.
        {\tt\small michelle.chong@control.lth.se} }%
\thanks{R. Postoyan is with Universit\'e de Lorraine, CRAN, UMR 7039 and the CNRS, CRAN, UMR 7039, France. His work is partially supported by the ANR under the grant
SEPICOT (ANR 12 JS03 004 01).
        {\tt\small romain.postoyan@univ-lorraine.fr} }%
\thanks{S. Khong is with the Institute for Mathematics and its Applications, The University of Minnesota, Minneapolis, MN 55455, USA. {\tt\small szkhong@umn.edu}  }%
\thanks{D. Ne\v{s}i\'{c} is with the Department of Electrical and Electronic Engineering, University of Melbourne, Australia. 
        {\tt\small dnesic@unimelb.edu.au}}%
}
\begin{document}

\maketitle
\thispagestyle{empty}
\pagestyle{empty}

\begin{abstract}
A supervisory observer is a multiple-model architecture, which estimates the parameters and the states of nonlinear systems. It consists of a bank of state observers, where each observer is designed for some nominal parameter values sampled in a known parameter set. A selection criterion is used to select a single observer at each time instant, which provides its state estimate and parameter value. The sampling of the parameter set plays a crucial role in this approach. Existing works require a sufficiently large number of parameter samples, but no explicit lower bound on this number is provided. The aim of this work is to overcome this limitation by sampling the parameter set automatically using an iterative global optimisation method, called DIviding RECTangles (DIRECT). Using this sampling policy, we start with $1+2n_p$ parameter samples where $n_p$ is the dimension of the parameter set. Then, the algorithm iteratively adds samples to improve its estimation accuracy. Convergence guarantees are provided under the same assumptions as in previous works, which include a persistency of excitation condition. The efficacy of the supervisory observer with the DIRECT sampling policy is illustrated on a model of neural populations. 
\end{abstract}

\section{Introduction}
Multiple-model approaches have traditionally been employed in the stochastic setting for estimation algorithms. Strong contenders include the Gaussian sum estimators \cite[Section 8.5]{anderson1979optimal}, the variable structure multiple-model framework \cite{li1996multiple, sheldon1993optimizing} and statistical filters such as particle filters \cite{arulampalam2002tutorial} and unscented Kalman filters \cite{wan2000unscented}. In the deterministic domain, the focus has been mainly on improving robustness of adaptive schemes for control purposes \cite{narendra2011changing, anderson2000multiple}, \cite[Chapter 6]{liberzon2003switching}. In the context of \emph{parameter} and \emph{state} estimation for dynamical systems, the multiple-model approach provides an alternative to other online estimation algorithms, such as adaptive observers \cite[Chapter 7]{besancon2007nonlinear}. The idea is to design a bank of \emph{state} observers (also known as a multi-observer), in which the state observer is designed for some sampled parameter values in a given set. Parameter and state estimates are then derived by combining the information from a subset of the state observers. This approach has the added benefit of modularity in the separation of the state and parameter estimation problems. This has been pursued for continuous-time linear systems by  \cite[Section 8.5]{anderson1979optimal, li1996multiple} in the stochastic setting, and by \cite{aguiar2008identification, narendra2011changing} in the deterministic setting, to name a few references. Most recently, we extended the deterministic setup to continuous-time nonlinear systems in \cite{chong2015parameter} inspired by results in supervisory control \cite[Chapter 6]{liberzon2003switching}, which we called a \emph{supervisory observer}.

In \cite{chong2015parameter}, the unknown system parameters are assumed to be constant and to belong to a known compact parameter set. Finite samples are drawn from this parameter set and a state observer is designed for each sample in a way that is robust against parameter mismatches. A selection criterion based on the mismatch between the estimated output of each observer and measured output from the plant then provides the state and parameter estimates at any given time. We showed in \cite{chong2015parameter} that the estimates are guaranteed to converge to be within a required margin, as long as the parameter set is sufficiently densely sampled and a persistency of excitation condition holds. To potentially ease the need for a large number of samples, we also introduced a dynamic sampling scheme which updates the sampling of the parameter set by iteratively zooming in on the region of the parameter set where the true parameter is `most likely' to reside. However, a major drawback is that the user is required to choose a pre-determined number of parameter samples at the start of the algorithm, which is hard to estimate. 

In this paper, we aim to overcome this major drawback by automatically sampling the parameter set using an iterative global optimisation method for Lipschitz cost functions on a compact domain, called DIRECT (DIviding RECTangles) which was initially proposed in \cite{jones1993lipschitzian}. This sampling procedure starts from sampling the center point of the parameter set and  additional given sample points in its neighbourhood. At subsequent iterations, DIRECT takes additional samples.  This is achieved by dividing the domain into non-overlapping hyperrectangles and sampling the center of each one. A lower bound on the cost function in each hyperrectangle is obtained using the fact that the cost function is Lipschitz.  The search at the local level is achieved by identifying potentially optimal hyperrectangles based on its lower bound of the cost function and its size, which are then further divided. In other words, the algorithm automatically generates additional samples in potentially optimal regions. The procedure is stopped once a pre-calculated number of iterations is reached. 

In our supervisory observer, the cost function used by the DIRECT algorithm is an integral form of the output mismatch of each observer and the plant over a finite time interval, which we call the monitoring signal. The domain of the cost function is the compact parameter set to which the true parameter belongs. We wait a sufficiently long interval between each iteration to allow the transient effects of each state observer to decay in order to obtain an improved accuracy. More observers are added as DIRECT updates the sampling of the parameter set. Upon the termination of DIRECT, we implement the last chosen observer for the remainder of the algorithm's run-time. Contrary to the old dynamic sampling policy in \cite{chong2015parameter}, the DIRECT sampling policy provides the following improvements. First, DIRECT does not require the user to estimate the number of observers needed beforehand. Instead, we start with $1+2n_p$ observers, where $n_p$ is the dimension of the parameter set. Then, DIRECT automatically takes samples until a given termination time, which is a clear advantage over the old sampling policy from the user's perspective. Second, the DIRECT policy potentially eases computational burden as we no longer need to implement all the required number of observers in parallel from initialisation. Third, after a pre-computed time, the supervisory observer implements only one observer for the rest of the run-time and thereby further reducing the computational resources required. 

The DIRECT algorithm was employed in the context of extremum-seeking in \cite{khong2013multidimensional} where the global minimum of the steady-state input-output map of a nonlinear time-invariant dynamical system is found using DIRECT without knowledge of the system model. Our setup is `gray box' in nature, where the structure of the parameterised nonlinear dynamical system is known, but not the states and parameters. The supervisory observer we propose can be viewed as the problem of online extremisation in dynamical systems, where the estimation algorithm aims to provide estimates that minimise the estimation error based on the steady-state behaviour of each observer by waiting sufficiently long between sampling times.  

The paper is structured as follows. We introduce the notation in Section \ref{sec:prelim} and formulate the problem in Section \ref{sec:prob}. Section \ref{sec:sup_obs} describes the supervisory observer with the DIRECT sampling policy in detail and we provide convergence guarantees in Section \ref{sec:conv}. We illustrate the efficacy of the proposed algorithm in Section \ref{sec:example} by revisiting a model of neuron populations considered in \cite{chong2015parameter}. Section \ref{sec:conc} concludes the paper with some discussions for future work. 

\section{Preliminaries} \label{sec:prelim}
Let $\mathbb{R}=(-\infty,\infty)$, $\mathbb{R}_{\geq 0}=[0,\infty)$, $\mathbb{R}_{>0}=(0,\infty)$, $\mathbb{N}=\{0,1,\dots\}$ and $\mathbb{N}_{\geq 1}=\{1,2,\dots\}$. Let $(u,v)$ where $u\in\mathbb{R}^{n_u}$ and $v\in\mathbb{R}^{n_v}$ denote the vector $(u^T,v^T)^{T}$. 
The smallest integer greater than $v\in\mathbb{R}$ is denoted by $\lceil v \rceil$.
For a vector $x=(x_1,x_2,\dots,x_n)\in\mathbb{R}^{n}$, the $\infty$-norm of $x$ is denoted $|x|:={\max}\{|x_1|,|x_2|,\dots,|x_n|\}$.
Let $\mathcal{H}(p_c)$ denote the hypercube centered at $p_c\in\mathbb{R}^{n_p}$ where the distance of $p_{c}$ to the edge is $1$, i.e. $\mathcal{H}(p_c):=\{p\in\mathbb{R}^{n_p}: |p-p_{c}| \leq 1 \}$. Hence, $\Delta\mathcal{H}(0)$ is the hypercube with center point at the origin and its distance to the edge is $\Delta$, i.e. ${\Delta}\mathcal{H}(0):=\{p\in\mathbb{R}^{n_p}: |p| \leq \Delta \}$. 
For any $\Delta>0$, the set of piecewise continuous functions from $\mathbb{R}_{\geq 0}$ to $\Delta\mathcal{H}(0)$ is denoted $\mathcal{M}_{\Delta}$.
A continuous function $\alpha:\mathbb{R}_{\geq 0}\to\mathbb{R}_{\geq 0}$ is a class $\mathcal{K}$ function, if it is strictly increasing and $\alpha(0)=0$; additionally, if $\alpha(r)\to\infty$ as $r\to\infty$, then $\alpha$ is a class $\mathcal{K}_{\infty}$ function. A continuous function $\beta:\mathbb{R}_{\geq0}\times \mathbb{R}_{\geq 0} \to \mathbb{R}_{\geq 0}$ is a class $\mathcal{KL}$ function, if: (i) $\beta(.,s)$ is a class $\mathcal{K}$ function for each $s\geq 0$; (ii) $\beta(r,.)$ is non-increasing and (iii) $\beta(r,s)\to 0$ as $s\to 0$ for each $r\geq 0$.

\section{Problem formulation} \label{sec:prob}

Consider the following nonlinear system
\begin{equation}
	\begin{array}{lll}
			\dot{x} & = & f(x,p^\star,u), \\
			y & = & h(x,p^\star),			\label{eq:system}
	\end{array}	
\end{equation}
where the state is $x\in\mathbb{R}^{n_x}$, the measured output is $y\in\mathbb{R}^{n_y}$, the measured input is $u\in\mathbb{R}^{n_u}$ and the \emph{unknown} parameter vector $p^{\star}\in\Theta\subset\mathbb{R}^{n_p}$ is constant. We assume that $\Theta$ is a known, normalised unit hypercube\footnote{Any compact parameter set $\Theta$ can be embedded in a hyperrectangle and then normalised to be a unit hypercube. Hence, the assumption of $\Theta$ being a hypercube of edge $1$ is made with no loss in generality as long as the parameter $p^\star$ lies in a known (potentially large) compact set.} $\mathcal{H}(p_c)$. The function $f:\mathbb{R}^{n_x}\times \mathbb{R}^{n_u} \times \mathbb{R}^{n_p} \to \mathbb{R}^{n_x}$ is locally Lipschitz and $h:\mathbb{R}^{n_x}\times \mathbb{R}^{n_p} \to \mathbb{R}^{n_y}$ is continuously differentiable. 

We aim to estimate the parameter $p^{\star}$ and the state $x$ of system \eqref{eq:system} assuming that the output $y$ and the input $u$ are measured. To this end, we use the supervisory observer architecture we proposed in \cite{chong2015parameter}. The idea is the following: the parameter set $\Theta$ is sampled and a state observer is designed for each parameter sample, forming a bank of observers. One observer is chosen to provide its state estimate and parameter value at any time instant based on a criterion. The results in \cite{chong2015parameter} showed that both estimates converge to within a tunable margin of the corresponding true values provided that the number of samples is sufficiently large, under some conditions. The sampling policies in \cite{chong2015parameter} require a sufficiently large number of samples and no explicit lower bound on this number was provided in \cite{chong2015parameter}. The objective of this paper is to overcome these issues by sampling the parameter set $\Theta$ dynamically in a smarter manner using the DIRECT optimisation algorithm, such that the number of samples is automatically generated by the algorithm based on the desired estimation accuracy. In other words, the user no longer has to set the number of samples needed from the start, as the algorithm automatically achieves the required number of samples after some iterations and terminates at a pre-computed time. 

We maintain all assumptions made in the original setup in \cite{chong2015parameter}. In particular, we assume the following boundedness property for system \eqref{eq:system}.

\begin{assum} \label{assum:bounded_x}
For any initial condition $x(0)$ and any piecewise-continuous input $u$, system \eqref{eq:system} is forward complete and generates unique uniformly bounded solutions, i.e. for any $\Delta_x$, $\Delta_u \geq 0$, there exists a constant $K_x = K_x(\Delta_x,\Delta_u) > 0$ such that for all $x(0)\in \Delta_{x}\mathcal{H}(0)$ and $u\in \mathcal{M}_{\Delta_u}$, the corresponding unique solution to \eqref{eq:system} satisfies $|x(t)| \leq K_{x} $, for all $t\geq 0$. \hfill $\Box$
\end{assum}

\section{Supervisory observer with the DIRECT sampling policy} \label{sec:sup_obs}
In this section, we first recall the supervisory observer architecture proposed in \cite{chong2015parameter}. We then explain how DIRECT is implemented to sample the parameter set $\Theta$.

\subsection{Architecture} \label{sec:arch}
The sampling of the parameter set $\Theta$ is carried out iteratively at each update time instant $t_k$, $k\in\mathbb{N}$ satisfying
\begin{equation} 
	t_{k+1}- t_{k} =:T_d, \label{eq:def_Td}
\end{equation} 
where $T_d>0$ is a design parameter. 

Let $\hat{P}(k)$ denote the set of parameter sample points generated at time $t_k$, $k\in\mathbb{N}_{\geq 1}$; and the generation of these points will be described in Section \ref{sec:sup_direct}. Also, let $\hat{\Theta}(k):=\hat{\Theta}(k-1)\cup \hat{P}(k)$ be the set of all parameter sample points from $t_0$ to $t_k$, where their corresponding cardinalities are ${N}_{\hat{P}}(k)$ and $N_{\hat{\Theta}}(k)$, respectively. Consequently, we have that $\hat{\Theta}(k-1)$ is a subset of $\hat{\Theta}(k)$. At the initial time $t_0$, $\hat{P}(0)$ and $\hat{\Theta}(0)$ consist of the centre point of $\Theta$ and $2n_p$ additional points near it, i.e. $\hat{P}(0)=\hat{\Theta}(0)=\{p_c, p_{c}\pm\delta e_i\}$, where $\delta= 1 / 3$ and $e_i$ is the $i$-th unit vector of $\mathbb{R}^{n_p}$. Hence, $N_{\hat{\Theta}}(0)={N}_{\hat{p}}(0)=1+2n_p$. In other words, the number of observers implemented over the first interval of time $[t_0,t_1)$ is $1+2n_p$. At subsequent update time instants $t_k$, $k\in\mathbb{N}$, new sampling points are added and an observer is designed for each of the newly generated samples $p_{i}^{k} \in \hat{P}(k)$, for $i\in\{1,\dots,{N}_{\hat{P}}(k)\}$, as follows
\begin{align}
	\dot{\hat{x}}_{i} &= \hat{f}(\hat{x}_{i}, p_{i}^k,u,y), \qquad \forall t\in[t_{k},t_{k+1}),  \nonumber\\
	\hat{y}_{i} &= h(\hat{x}_{i}, p_i^k),  \label{eq:multiobs}
\end{align}
where the function $\hat{f}$ is continuously differentiable. At initialisation $t_0=0$, we set arbitrary initial conditions $\hat{x}_{i}(t_0) \in \mathbb{R}^{n_x}$ for $i\in \{1,\dots,1+2n_p\}$. At subsequent update times $t_k$, $k\in\mathbb{N}_{\geq 1}$, each `old' observer is kept running, i.e. the observers designed for each $p\in\hat{\Theta}(k-1)$, and each new observer is initialised as follows
\begin{equation}
		\hat{x}_{i}(t_k^{+}) = \hat{x}_{\sigma(t_k)}(t_{k}), \label{eq:multiobs2}
\end{equation} 
where $\sigma$ chooses one observer from the bank of observers and is defined below in \eqref{eq:decision_monitor}.

\begin{rem}
		We only initialise the `new' observers according to \eqref{eq:multiobs2}. Our results also apply when all the observers are reinitialised according to \eqref{eq:multiobs2}. However, no significant advantage is seen both in the analysis and when implemented in simulations in Section \ref{sec:example}. \hfill $\Box$
\end{rem}

Denoting the state estimation error as $\tilde{x}_{i}:=\hat{x}_{i}-x$, the output error as $\tilde{y}_{i}:=\hat{y}_{i}-y$ and the parameter error as $\tilde{p}_{i}^{k}:=p_{i}^{k}-p^{\star}$, for all $i\in\{1,\dots,N_{\hat{\Theta}}(k)\}$, we obtain the following state estimation error systems for all $t\in[t_{k},t_{k+1})$,
{ \begin{align}
	\dot{\tilde{x}}_{i} & =  \hat{f}(\tilde{x}_{i}+x,\tilde{p}_{i}^{k}+p^{\star},u,y) - f(x,p^{\star},u) \nonumber \\
	& =: F(\tilde{x}_{i}, \tilde{p}_{i}^{k}, p^{\star}, u, x),  \nonumber \\
	\tilde{y}_{i} & =  h(\tilde{x}_{i}+x,\tilde{p}_{i}^{k}+p^{\star}) - h(x,p^{\star}) =: H(\tilde{x}_{i},  \tilde{p}_{i}^{k}, p^{\star},x), \nonumber \\
	\tilde{x}_{i}(t_k^+) & = \left\{\begin{array}{ll} \hat{x}_{\sigma(t_k)}(t_k) - x(t_k), & i\in\hat{P}(k), \\ 
												  \hat{x}_{i}(t_k) - x(t_k), & i\in\hat{\Theta}(k)\setminus \hat{P}(k). \end{array}  \right. \label{eq:error_sys_general}
\end{align} } 
All the observers are designed such that the following property holds.
\begin{assum}\label{ass:obs_error_i}
	Consider the state estimation error system \eqref{eq:error_sys_general} for $i\in\{1,\dots,N\}$, $N\in\mathbb{N}_{\geq 1}$. Let $\tilde{\Theta}:=\{ p-p^{\star} : p \in \Theta \textrm{ and } p^{\star} \in \Theta  \}$. There exist scalars $a_1$, $a_2$, $\lambda_0>0$ and a continuous non-negative function $\tilde{\gamma}:\tilde{\Theta}\times \mathbb{R}^{n_x} \times \mathbb{R}^{n_u} \rightarrow \mathbb{R}_{\geq 0}$ where $\tilde{\gamma}(0,z,\bar{z})=0$ for all $z \in \mathbb{R}^{n_x}$, $\bar{z} \in \mathbb{R}^{n_u}$, such that for any $\tilde{p}_{i} \in \tilde{\Theta}$, there exists a continuously differentiable function $V_{i}: \mathbb{R}^{n_x} \rightarrow \mathbb{R}_{\geq 0}$ which satisfies the following for all $u\in\mathbb{R}^{n_u}$, $\tilde{x}_{i} \in\mathbb{R}^{n_x}$, $x\in\mathbb{R}^{n_x}$
	\begin{equation}
		a_{1}|\tilde{x}_{i}|^{2} \leq V_{i}(\tilde{x}_{i}) \leq a_{2}|\tilde{x}_{i}|^{2}, \label{eq:lyap_iss_1}
	\end{equation}
	\begin{equation}
			\frac{\partial V_i}{\partial \tilde{x}_{i}} F(\tilde{x}_{i},\tilde{p}_{i},p^{\star},u,x) \leq -\lambda_{0} V_{i}(\tilde{x}_{i}) + \tilde{\gamma}(\tilde{p}_{i},x,u). \label{eq:lyap_iss_2}
	\end{equation}	 \hfill $\Box$
\end{assum}
When there is no parameter mismatch $\tilde{\gamma}(0,x,u)=0$, Assumption \ref{ass:obs_error_i} implies that state estimates converge exponentially to the true state for all initial conditions. When there is a parameter mismatch $\tilde{p}_i\neq 0$, the state estimation error system satisfies an input-to-state exponential stability property with respect to $\tilde{p}_i$ in view of Assumption \ref{assum:bounded_x}. Examples of system \eqref{eq:system} for which observers \eqref{eq:multiobs} can be designed are provided in Section VI of \cite{chong2015parameter}. This includes linear systems and a class of nonlinear systems with monotone nonlinearities, such as the neural example considered later in simulations in Section \ref{sec:example}.

We assume that the output error of each of the observer $\tilde{y}_{i}$ satisfies the following property.
\begin{assum} \label{ass:PE_y_tilde}
	Consider the state estimation error system \eqref{eq:error_sys_general} for $i \in \{1,\dots,N\}$, $N\in\mathbb{N}_{\geq 1}$. For all $\Delta_{\tilde{x}}$, $\Delta_{x}$, $\Delta_{u}>0$, there exist a constant $T_{f}=T_{f}(\Delta_{\tilde{x}}, \Delta_{x}, \Delta_{u})>0$ and a class $\mathcal{K}_{\infty}$ function $\alpha_{\tilde{y}}=\alpha_{\tilde{y}}(\Delta_{\tilde{x}}, \Delta_{x}, \Delta_{u})$ such that for all $\tilde{x}_{i}(0)\in \Delta_{\tilde{x}}\mathcal{H}(0)$, $x(0) \in \Delta_{{x}}\mathcal{H}(0)$, for any $u\in \mathcal{M}_{\Delta_u}$,  and for all $\tilde{p}_{i} \in \tilde{\Theta}$, the corresponding solution to \eqref{eq:error_sys_general} satisfies
	\begin{equation}
		\int_{t-T_{f}}^{t} |\tilde{y}_{i}(\tau)|^{2} d\tau \geq \alpha_{\tilde{y}}(|\tilde{p}_{i}|), \qquad \forall t\geq T_f. \label{eq:tilde_y_PE}
	\end{equation} \hfill $\Box$
\end{assum}
The inequality \eqref{eq:tilde_y_PE} is a variant of the persistency of excitation (PE) condition found in many adaptive and identification schemes \cite{bitmead1984persistence}. In \cite[Proposition 1]{chong2015parameter}, we relate the PE-like condition \eqref{eq:error_sys_general} to the classical PE condition in the literature such that Assumption \ref{ass:PE_y_tilde} can be guaranteed a priori for certain classes of systems.  

The output error from each observer forms the monitoring signal used in the DIRECT sampling algorithm, defined as follows
	 \begin{equation}
			 \mu(p,t_k,t) := \int_{t_k}^{t} \exp(-\lambda(t-s)) |\tilde{y}(p, s)|^{2} ds, \qquad  \forall t\geq t_k, \label{eq:monitoring}
		 \end{equation}
		where $\lambda>0$ is a design parameter and we use $\tilde{y}(p,t)$ in place of $\tilde{y}_{i}(t)$ to highlight its dependance on the parameter $p$. Our parameter and state estimates are chosen from the bank of observers to be, for any $t\geq 0$,
\begin{align}
		\hat{p}(t) &:= \hat{p}_{\sigma(t)}, \\
		\hat{x}(t) &:= \hat{x}_{\sigma(t)}(t), \label{eq:estimates}
\end{align}
where $\sigma$ is given by
\begin{align}
		\sigma(t) & \in \underset{i\in\{1,\dots,N_{\hat{\Theta}}(k)\}}{\arg \min} \mu(p_i,t_k,t),\;  \forall t\in[t_{k}, t_{k+1}). \label{eq:decision_monitor}
\end{align}

We will see in Section \ref{sec:terminate} that the DIRECT sampling policy terminates according to a criterion described in Section \ref{sec:terminate} at $t_{k^{\star}}$, $k^{\star}\in\mathbb{N}$, after which we only implement the observer for the last chosen sample at $p_{\sigma(t_{k^{\star}})}$, i.e the supervisory observer is reduced to one observer as follows, for all $t\in[t_{k^{\star}},\infty)$,
\begin{align}
	\dot{\hat{x}}_{\sigma(t_{k^{\star}})} &= \hat{f}\left(\hat{x}_{\sigma(t_{k^{\star}})}, p_{\sigma(t_{k^{\star}})},u,y\right)  \nonumber\\
	\hat{y}_{\sigma(t_{k^{\star}})} &= h\left(\hat{x}_{\sigma(t_{k^{\star}})}, p_{\sigma(t_{k^{\star}})} \right). \label{eq:multiobs_last}
\end{align}

\subsection{The DIRECT sampling policy} \label{sec:sup_direct}
We generate the sampling points of the parameter set $\Theta$ at every update time instant $t_k$, $k\in\mathbb{N}$ according to the DIRECT sampling policy as follows 

\begin{enumerate}[i.]
\item At the initial time $t_0=0$, set $k=0$ (iteration counter).
\item  At $t=t_1$, evaluate $\mu(p_c,t_0,t_1)$, where we recall that $p_c$ is  the center point of $\Theta$ and follow the procedure for dividing the hyperrectangle\footnote{Procedure for dividing a hyperrectangle: Given a hyperrectangle at time $t_k$ for $k\in\mathbb{N}$, identify the dimensions $i\in I \subseteq \{1,\dots,n_p\}$ in which the hyperrectangle has the maximum edge length and let $\delta$ be a third of this value.  Divide the hyperrectangle containing the sample point $p_j$ into thirds according to the dimensions in $j\in I$, in ascending order of $\min\{\mu(p_j - \delta e_i,t_{k-1},t_{k}), \mu(p_j + \delta e_i,t_{k-1},t_k) \}$, where $e_i$ is the $i$-th unit vector.}.  Set $\hat{\mu}_{1}=\mu(p_c,t_0,t_1)$. Increment the iteration counter to $k = 1$.
\item Identify the set $S(k)$ of the indices of potentially optimal hyperrectangles, i.e. for every $i\in\{1,\dots,N(k)\}$ samples for which there exists ${L}_{k}>0$ such that,
		\begin{align}
				& \mu(p_i,t_{k-1},t_k) - {L}_{k} d_i \leq  \mu(p_j,t_{k-1},t_k) - {L}_{k} d_j, \nonumber \\ & \qquad \qquad \qquad \qquad \qquad \qquad j\in\{1,\dots,N(k)\},  \nonumber \\
				& \mu(p_i,t_{k-1},t_k) - {L}_{k} d_i \leq \hat{\mu}_{k-1} - \epsilon |\hat{\mu}_{k-1}|, \;  \epsilon >0, \label{eq:PO_cond2}
		\end{align}
		where $d_i$ denotes the distance from the centre point $p_i$ to a vertex of the hyperrectangle $i$. Remarks \ref{rem:PO_rect_L} and \ref{rem:PO_rect_e} below discuss the search parameters $L_k$ and $\epsilon$ in \eqref{eq:PO_cond2}.  
\item For each potentially optimal hyperrectangle indexed by $j\in S(k)$, subdivide the hyperrectangle indexed by $j$ according to the procedure for dividing hyperrectangles\footnotemark[2].
\item Increment the iteration counter, $k^+=k+1$ and set the estimate
		\begin{equation}
				\hat{\mu}_{k} := \underset{i\in\{1,\dots,N(k)\}}{\min} \mu(p_i,t_{k-1},t_k). \label{eq:muhat}
		\end{equation}
\item Go to Step iii until $k^*$ iterations have been reached. The selection of $k^*$ is specified in Section \ref{sec:terminate}.  
\end{enumerate}

\begin{rem} \label{rem:PO_rect_L}
The search parameter $L_{k}$ in \eqref{eq:PO_cond2} can be thought of as a rate-of-change constant. In the case where the Lipschitz constant $\tilde{L}_{k}$ of function $\mu$ is known, the user may restrict the search for $L_{k}$ to $L_{k}\leq \tilde{L}_{k}$. However, the knowledge of Lipschitz constant is not a requirement for convergence to the global minimum and efficient algorithms can be used to find $L_{k}$ such as one called Graham's scan, see \cite{jones1993lipschitzian}.	\hfill $\Box$
\end{rem}

\begin{rem} \label{rem:PO_rect_e}
The search parameter $\epsilon>0$ in \eqref{eq:PO_cond2} ensures that at the current iteration $k$, only hyperrectangles with cost $\mu$ that is much smaller than the minimum cost of the previous iteration $\hat{\mu}_{k-1}$ are identified as potentially optimal. Computational results in \cite{jones1993lipschitzian} show that DIRECT is insensitive to the choice of $\epsilon$ and a good value for $\epsilon$ ranges from $10^{-3}$ to $10^{-7}$.	\hfill $\Box$
\end{rem}

In practice, the DIRECT algorithm can be implemented with modifications of the code for DIRECT from \cite{finkelcode}. This is due to the dynamic cost $\mu$ (c.f. \eqref{eq:monitoring}) in our setup as opposed to the static cost the code was originally written for.

\subsection{Termination criterion of the DIRECT sampling policy} \label{sec:terminate}
The final piece of the algorithm is the termination time of the DIRECT sampling policy. Before doing so, a critical sampling property required to show convergence is the fact that DIRECT will generate samples such that the distance between the true parameter $p^{\star}$ and the closest sample in $\hat{\Theta}(k)$ as defined below
\begin{equation}
		d(p^{\star}, \hat{\Theta}(k)) := \underset{p\in\hat{\Theta}(k)}{\min} |p^{\star} - p| \label{eq:d_def}
\end{equation}
tends to zero if $N_{\hat{\Theta}(k)}$ tends to infinity with increasing $k$. We formalize this in the next lemma.

\begin{lem} \label{lem:d_0}
	The DIRECT method of sampling the parameter set $\Theta$ results in the following property,
	\begin{align}
		N(k) \to \infty \implies	d(p^{\star},\hat{\Theta}(k)) \to 0, \label{eq:lem_d}
	\end{align}
	where we recall that $\hat{\Theta}(k)$ is the set of all sample points at time $t\in[t_{k},t_{k+1})$. \hfill $\Box$
\end{lem}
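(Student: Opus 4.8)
The plan is to show that the DIRECT subdivision scheme forces the diameter of the hyperrectangle containing $p^\star$ to shrink to zero as the total number of samples $N(k)$ grows, which immediately yields $d(p^\star,\hat\Theta(k))\to 0$ since the center of that shrinking hyperrectangle is a sample point whose distance to $p^\star$ is bounded by the hyperrectangle's diameter.

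First I would set up the geometry of the division procedure. Since $\Theta=\mathcal{H}(p_c)$ is a unit hypercube and every division trisects a hyperrectangle along its longest edge(s), each hyperrectangle produced by DIRECT has edge lengths of the form $3^{-m}$ for nonnegative integers $m$, and these lengths differ by at most a factor of $3$ across dimensions. Consequently the diameter $2d_i$ of hyperrectangle $i$ is controlled by its largest edge, and a hyperrectangle's diameter can only decrease when it is subdivided. The key quantitative observation is that after a hyperrectangle has been selected for division a large number of times, all of its edges must have been trisected many times, so its diameter tends to zero.

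The crucial step — and the main obstacle — is arguing that the particular hyperrectangle containing $p^\star$ is subdivided infinitely often as $N(k)\to\infty$. I would establish this by a counting/covering argument rather than by relying on the potential-optimality criterion \eqref{eq:PO_cond2} directly: since $\Theta$ has finite volume and each hyperrectangle at ``level'' $(m_1,\dots,m_{n_p})$ has volume $\prod_i 3^{-m_i}$, only finitely many hyperrectangles can exist whose diameter exceeds any fixed $\eta>0$. Hence, if $N(k)\to\infty$, the number of hyperrectangles grows without bound and they cannot all remain large; more carefully, I would show that DIRECT always selects at least one hyperrectangle from the group of largest diameter (this follows from the structure of the lower-envelope / potentially-optimal selection, since the largest hyperrectangle always lies on the lower-right of the size–cost diagram and is therefore potentially optimal for sufficiently large $L_k$), so the maximal diameter present in the partition is driven down as iterations proceed. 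The remaining technical point is to pin down that $p^\star$ lies in some hyperrectangle at every iteration and that the hyperrectangle containing it eventually falls into the ``largest'' group and is divided; this uses that the partition always covers $\Theta\ni p^\star$ with non-overlapping hyperrectangles.

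Finally I would close the argument: given $\varepsilon>0$, choose the iteration count large enough that every hyperrectangle of maximal diameter has diameter below $\varepsilon$; then the hyperrectangle containing $p^\star$ has its center $p$ within $\varepsilon$ of $p^\star$, so $d(p^\star,\hat\Theta(k))\le|p^\star-p|<\varepsilon$. Since $\varepsilon$ was arbitrary, \eqref{eq:lem_d} follows. I expect the convergence properties of DIRECT established in \cite{jones1993lipschitzian} to supply the core ``dense sampling'' fact, so much of the work reduces to invoking that result and translating it from the static-cost setting to our setting, where the cost $\mu$ is dynamic but the geometric subdivision structure is identical.
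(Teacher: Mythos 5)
Your proposal is correct and follows essentially the same route as the paper's proof: the load-bearing step in both is that, at every iteration, the largest hyperrectangle with the best value of $\mu$ satisfies the potential-optimality conditions \eqref{eq:PO_cond2} for sufficiently large $L_k$ and is therefore divided, so the maximal-size group is exhausted in finitely many iterations, the partition becomes uniformly fine, and the samples become dense around $p^\star$ (the paper phrases this as a contradiction via the minimum division count $r_k$; you phrase it directly, which is only a cosmetic difference). One caution: your preliminary volume-counting remark would not suffice on its own, since $N(k)\to\infty$ by itself does not force division of the hyperrectangle containing $p^\star$ (DIRECT could in principle pile samples elsewhere), but you correctly fall back on the potential-optimality selection property, so there is no gap.
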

\begin{proof}
	The proof follows from the arguments provided in Section 5 of \cite{jones1993lipschitzian}, which we recapitulate here. Suppose to the contrary that $d(p^{\star},\hat{\Theta}(k))\not\to 0$ as $k\to\infty$, then there must exist $d^{*}>0$ such that $\underset{k\to\infty}{\lim}d(p^{\star},\hat{\Theta}(k)) = d^*$ since $d(p^{\star},\hat{\Theta}(k))$ is non-increasing with $k$ and lower-bounded by $0$. In other words, letting $r_k$ be the smallest number of divisions undergone by any hyperrectangle at iteration $k$, this means that there exists $r_{k^*} \in \mathbb{N}$ in which the number of divisions never increases after iteration $k^*$, i.e. $\underset{k\to\infty}{\lim} r_{k} = r_{k^*}$. Therefore, at iteration $k^*$, there will be at least one hyperrectangle with $r_{k^*}$ divisions forming the set $S_{r^*}$. Let the cardinality of this set be $N_{r^*}$. All hyperrectangles in $S_{r^*}$ have the largest center-to-vertex distance $d_{r^*}$, but may differ in the value of the monitoring signal $\mu$. According to the conditions for potentially optimal hyperrectangles in \eqref{eq:PO_cond2}, the hyperrectangle $j\in S_{r^*}$ with the best value will be identified as a potentially optimal hyperrectangle. Since hyperrectagle $j$ is potentially optimal, it will be divided. By iteration $k^*+N_{r^*}$, all the hyperrectangles in set $S_{r^*}$ would have been divided. This contradicts the assumption that $\underset{k\to\infty}{\lim} r_{k} = r_{k^*}$. Therefore, this proves that $\underset{k\to\infty}{\lim} r_{k} = \infty$ and consequently, we obtain \eqref{eq:lem_d}.   
\end{proof} 

Lemma \ref{lem:d_0} can be used to derive a termination time for the DIRECT sampling policy. This is the purpose of the next lemma which follows from Theorem 4.2 in \cite{gablonsky2001thesis} and Lemma \ref{lem:d_0}.

\begin{lem} \label{lem:direct_rect}
	Given any $d^{\star}>0$, let $k^* := 3^{n_p -1}\left(\frac{3^{n_p(i+1)}-1}{3^{n_p}-1}\right)$, where $i$ satisfies
	\begin{equation}
		\frac{(n_p 3^{-2i})^{1/2}}{2} \leq d^{\star}.
	\end{equation}
	Then, the DIRECT algorithm described in Section \ref{sec:sup_direct} samples the parameter set $\Theta \subset \mathbb{R}^{n_p}$ such that
	\begin{equation}
		d(p^{\star},\hat{\Theta}(k)) \leq d^{\star}, \; \forall k\geq k^*. \label{eq:p_resolution}
	\end{equation}
	  \hfill $\Box$
\end{lem}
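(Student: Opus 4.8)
The plan is to split the statement into a purely geometric resolution estimate and a worst-case counting estimate, and then glue them together using Lemma~\ref{lem:d_0} and the monotonicity of $d(p^\star,\hat{\Theta}(k))$ in $k$.

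First I would record the geometric fact behind the particular threshold $\tfrac12(n_p 3^{-2i})^{1/2}$. By the trisection rule of Section~\ref{sec:sup_direct}, every box produced by DIRECT, starting from the unit hypercube of edge $1=3^0$, has edge $3^{-a_j}$ in coordinate $j$ for some $a_j\in\mathbb{N}$, and its centre is one of the samples in $\hat{\Theta}(k)$. If such a box has been trisected at least $i$ times in \emph{every} coordinate, i.e. $a_j\ge i$ for all $j$, then its centre-to-vertex Euclidean distance is $\tfrac12\big(\sum_j 3^{-2a_j}\big)^{1/2}\le\tfrac12(n_p 3^{-2i})^{1/2}$. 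Since the boxes tile $\Theta$, the true value $p^\star$ lies in some box $R$ whose centre $c_R$ is a sample, and since the $\infty$-norm $|\cdot|$ used in \eqref{eq:d_def} is dominated by the Euclidean norm $|\cdot|_2$, I get $d(p^\star,\hat{\Theta}(k))\le|p^\star-c_R|\le|p^\star-c_R|_2\le\tfrac12(n_p 3^{-2i})^{1/2}$. Combined with the hypothesis $\tfrac12(n_p 3^{-2i})^{1/2}\le d^\star$, this reduces the lemma to showing that after $k^\star$ iterations every box has been trisected at least $i$ times in each coordinate.

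Next I would bound the number of iterations needed to drive this minimal refinement level up to $i$ everywhere. The largest boxes present are the least subdivided and therefore carry the greatest centre-to-vertex distance $d_i$; hence, exactly as in the proof of Lemma~\ref{lem:d_0}, for $L_k$ chosen large enough at least one of them satisfies the potential-optimality test \eqref{eq:PO_cond2} and is subdivided at each iteration, so the smallest refinement level cannot stall. The worst case for \emph{uniform} refinement is the exhaustive one, in which DIRECT must process every box of the current largest size before the minimal level can increase. Counting, generation by generation, the boxes produced by the trisection rule, where subdividing a box along its longest coordinates creates $2n_p+1$ children of unequal size, and summing the cumulative totals over the generations $0,1,\dots,i$ is precisely the bookkeeping carried out in Theorem~4.2 of \cite{gablonsky2001thesis}; it yields the closed form $k^\star=3^{n_p-1}\big(3^{n_p(i+1)}-1\big)/(3^{n_p}-1)$. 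I would therefore quote that theorem for the count, after checking that its hypotheses match our division rule.

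Finally I would assemble the pieces. By the counting step, after $k^\star$ iterations every box has been trisected at least $i$ times in each coordinate, so by the geometric step $d(p^\star,\hat{\Theta}(k^\star))\le\tfrac12(n_p 3^{-2i})^{1/2}\le d^\star$. Because samples are only ever added, $\hat{\Theta}(k-1)\subseteq\hat{\Theta}(k)$, so $d(p^\star,\hat{\Theta}(k))$ is non-increasing in $k$ and the bound \eqref{eq:p_resolution} persists for all $k\ge k^\star$. I expect the counting step to be the main obstacle: translating DIRECT's nonuniform $2n_p+1$-way trisection into an exact worst-case iteration count, and matching the specific closed form for $k^\star$, is delicate, which is exactly why it is imported from Theorem~4.2 of \cite{gablonsky2001thesis}, with Lemma~\ref{lem:d_0} supplying the complementary fact that the largest boxes keep being selected so that the worst-case count is indeed achieved.
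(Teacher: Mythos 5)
Your proposal is correct and takes essentially the same route as the paper: the paper gives no standalone proof of this lemma, stating only that it ``follows from Theorem 4.2 in \cite{gablonsky2001thesis} and Lemma \ref{lem:d_0}'', which is exactly your decomposition --- Gablonsky's theorem supplies the worst-case iteration count, and the divide-a-largest-hyperrectangle argument underlying Lemma \ref{lem:d_0} guarantees the refinement cannot stall. The details you add (the centre-to-vertex bound $\tfrac{1}{2}\bigl(\sum_j 3^{-2a_j}\bigr)^{1/2}\le\tfrac{1}{2}(n_p 3^{-2i})^{1/2}$, domination of the $\infty$-norm by the Euclidean norm, and the monotonicity of $d(p^\star,\hat{\Theta}(k))$ from $\hat{\Theta}(k-1)\subseteq\hat{\Theta}(k)$) are sound and consistent with Remark \ref{rem:k_iteration}'s observation that the count is a conservative one-division-per-iteration bookkeeping.
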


Hence, given a desired bound $d^{\star}>0$ on the distance between the true parameter $p^*$ and the closest sampling point, the algorithm can be terminated once the number of iterations reaches $k^{*}$ as defined in Lemma \ref{lem:direct_rect}. In practice, once we have decided on $d^\star$, we know that after $k^\star T_d$ units of time, a single observer can be run as described at the end of Section \ref{sec:arch}. 

\begin{rem} \label{rem:k_iteration}
	 The estimation of the number of iterations $k^{*}$ to achieve the desired resolution \eqref{eq:p_resolution} in Lemma \ref{lem:direct_rect} is calculated based on the assumption that only one hyperrectangle gets divided at each iteration. In reality, the number of hyperractangles identified to be potentially optimal according to \eqref{eq:PO_cond2} can be more than one (see Section \ref{sec:example}) and hence, the calculations in Lemma \ref{lem:direct_rect} is an over-approximation. The calculation is tight only when the cost function $\mu(p,t,\tau)$ is constant for all $p\in\Theta$ and $t$, $\tau>0$. \hfill $\Box$
\end{rem}

\section{Convergence guarantees} \label{sec:conv}
\subsection{Main result} \label{sec:main_result}
We provide the following convergence guarantees and its proof is provided in Section \ref{sec:proof_main}.
\begin{thm} \label{thm:direct}
	Consider system \eqref{eq:system}, the multi-observer \eqref{eq:multiobs}-\eqref{eq:multiobs2} and \eqref{eq:multiobs_last}, the monitoring signals \eqref{eq:monitoring}, the selection criterion \eqref{eq:decision_monitor} and the estimates \eqref{eq:estimates}, under Assumptions \ref{assum:bounded_x}-\ref{ass:PE_y_tilde} and the DIRECT sampling policy. Given any $\Delta_{x}$, $\Delta_{\tilde{x}}$, $\Delta_u>0$, $d^{\star}>0$ and  $\eta>0$, there exist a class $\mathcal{K}_{\infty}$ function ${\gamma}_{\tilde{x}}$, a sufficiently large $T>0$ such that for any sampling interval $T_d \geq T$, a class $\mathcal{K}_{\infty}$ function $\nu_{\tilde{p}}$ and a constant $T^{\star}>0$ such that the following holds
	\begin{align}
		&|\tilde{p}_{\sigma(t)}(t)| \leq \nu_{\tilde{p}}(d^{\star}) + \eta, \qquad \forall t\geq T^{\star}, \nonumber \\ &\underset{t\to\infty}{\limsup} |\tilde{x}_{\sigma(t)}(t)| \leq {\gamma}_{\tilde{x}}(d^{\star}) + \eta, \label{eq:conc}
	\end{align}
	for all $(x(0),\tilde{x}(0))\in \Delta_{x}\mathcal{H}(0) \times \Delta_{\tilde{x}}\mathcal{H}(0)$ and for any $u\in\mathcal{M}_{\Delta_u}$.  \hfill $\Box$
\end{thm}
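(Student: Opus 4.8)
The plan is to follow the line of reasoning developed in \cite{chong2015parameter}, but with Lemmas \ref{lem:d_0} and \ref{lem:direct_rect} supplying the guaranteed parameter resolution in place of the old density hypothesis. The argument decomposes into a boundedness step, a sampling-resolution step, a monitoring-signal discrimination step that controls the parameter error, and a final state-error step via the ISS property of Assumption \ref{ass:obs_error_i}. First I would establish uniform boundedness of all signals. By Assumption \ref{assum:bounded_x} the plant state $x$ remains in a fixed compact set $K_x\mathcal{H}(0)$ for all $t\geq 0$. Since only finitely many observers are created before the termination index $k^\star$ of Lemma \ref{lem:direct_rect}, and each reinitialisation \eqref{eq:multiobs2} resets the new error to $\hat{x}_{\sigma(t_k)}(t_k)-x(t_k)$, I would argue by induction on $k$ that every estimation error $\tilde{x}_i$ stays in a fixed ball: the bounds \eqref{eq:lyap_iss_1} and \eqref{eq:lyap_iss_2} together with compactness of $\tilde{\Theta}$ give an ISS-type estimate whose asymptotic gain depends only on $\sup\tilde{\gamma}$ over the compact operating region, and the reinitialised value is bounded whenever the currently selected error is. This fixes the radii $(\Delta_{\tilde{x}},\Delta_x,\Delta_u)$ and hence the horizon $T_f$ and the class $\mathcal{K}_\infty$ function $\alpha_{\tilde{y}}$ furnished by Assumption \ref{ass:PE_y_tilde}.

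Given $d^\star$, I would then invoke Lemma \ref{lem:direct_rect} to choose $k^\star$ so that for every $k\geq k^\star$ there is a sample $p_{\mathrm{near}}\in\hat{\Theta}(k)$ with $|p_{\mathrm{near}}-p^\star|\leq d^\star$, cf. \eqref{eq:p_resolution}; this determines $t_{k^\star}$ and ultimately $T^\star$. The analytic core is to show that after waiting a sufficiently long interval the monitoring signal \eqref{eq:monitoring} faithfully ranks the samples by their parameter error, so that the minimiser $\sigma$ in \eqref{eq:decision_monitor} necessarily has small $|\tilde{p}_\sigma|$. For a lower bound I would use $\exp(-\lambda(t-s))\geq \exp(-\lambda T_f)$ on the last window of length $T_f$ together with \eqref{eq:tilde_y_PE} to obtain $\mu(p_i,t_{k-1},t_k)\geq e^{-\lambda T_f}\alpha_{\tilde{y}}(|\tilde{p}_i|)$ whenever $T_d\geq T_f$. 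For the upper bound on $\mu(p_{\mathrm{near}},\cdot)$ I would combine the exponential decay of $\tilde{x}_{\mathrm{near}}$ from Assumption \ref{ass:obs_error_i}, whose residual is governed by $\tilde{\gamma}(\tilde{p}_{\mathrm{near}},\cdot)$ and thus small because $|\tilde{p}_{\mathrm{near}}|\leq d^\star$, with the local Lipschitzness of $h$ to bound $|\tilde{y}_{\mathrm{near}}|$, and then integrate; a large $T_d$ drives the transient contribution below any prescribed $\eta$-level. Since $\sigma(t)\in\arg\min_i \mu(p_i,\cdot)$ gives $\mu(p_\sigma,\cdot)\leq \mu(p_{\mathrm{near}},\cdot)$, the two bounds yield $\alpha_{\tilde{y}}(|\tilde{p}_\sigma|)\leq e^{\lambda T_f}\mu(p_{\mathrm{near}},\cdot)$, and inverting the $\mathcal{K}_\infty$ function $\alpha_{\tilde{y}}$ produces $|\tilde{p}_{\sigma(t)}|\leq \nu_{\tilde{p}}(d^\star)+\eta$. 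As the sampling is frozen at $p_{\sigma(t_{k^\star})}$ per \eqref{eq:multiobs_last} after $t_{k^\star}$, this bound persists for all $t\geq T^\star$.

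With the parameter error controlled, I would apply the comparison principle to \eqref{eq:lyap_iss_1}--\eqref{eq:lyap_iss_2} for the single surviving observer: since $\tilde{\gamma}$ is continuous with $\tilde{\gamma}(0,\cdot,\cdot)=0$ and its argument lies in a fixed compact set, $\limsup_{t\to\infty} V_{\sigma}(\tilde{x}_{\sigma})\leq \lambda_0^{-1}\sup\tilde{\gamma}(\tilde{p}_{\sigma},\cdot)$, which together with \eqref{eq:lyap_iss_1} gives $\limsup_{t\to\infty}|\tilde{x}_{\sigma(t)}|\leq \gamma_{\tilde{x}}(d^\star)+\eta$ after absorbing the parameter-error bound into a fresh class $\mathcal{K}_\infty$ function $\gamma_{\tilde{x}}$, establishing \eqref{eq:conc}.

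I expect the main obstacle to be the quantification of the waiting time. One must choose $T$ (hence $T_d\geq T$) large enough that, simultaneously, the reinitialisation transients induced by \eqref{eq:multiobs2} decay below the $\eta$-budget in every window, and the forgetting factor $\lambda$ does not erode the persistency-of-excitation lower bound. Reconciling the exponentially weighted integral in \eqref{eq:monitoring} with the fixed PE horizon $T_f$ across a growing and repeatedly reinitialised bank of observers, while keeping the constants uniform over the compact operating region fixed in the boundedness step, is the delicate bookkeeping at the heart of the argument.
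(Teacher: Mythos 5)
Your proposal is correct and takes essentially the same route as the paper's proof: Lemma \ref{lem:direct_rect} fixes the resolution $d^\star$ after $k^\star$ iterations, the argmin property of $\sigma$ in \eqref{eq:decision_monitor} is played against the sample nearest to $p^\star$, the monitoring signal is sandwiched between class $\mathcal{K}_\infty$ functions of the parameter error (lower bound from the PE condition of Assumption \ref{ass:PE_y_tilde}, upper bound from the ISS-type decay of Assumption \ref{ass:obs_error_i} after a sufficiently long wait $T$), and the state bound follows from the ISS estimate with the additive $\eta$-budget absorbed by tuning the transient term. The only difference is packaging: the paper imports the sandwich bound and the state-error bound as Lemmas \ref{lem:mu} and \ref{lem:state_error} (from \cite{chong2015parameter}) instead of re-deriving them inline as you do, and it resolves the gain-composition issue you gloss over (an $\eta$-perturbation of the parameter bound passing through $\bar{\gamma}_{\tilde{x}}$) via the explicit tuning condition \eqref{eq:thm_step}.
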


The convergence guarantees \eqref{eq:conc} show that the upper bound of the estimation error on the parameter and states decreases with $d^\star$, after a sufficiently long time. Additionally, note that $\eta$ can be taken to be as small as desired by increasing $T$. Hence, the estimation accuracy can be tuned by adjusting $d^\star$ and by ensuring a sufficiently large sampling time $T_d$.  

\subsection{Proof of Theorem \ref{thm:direct}} \label{sec:proof_main}
We first prove several lemmas which are used in showing convergence of the algorithm before proving Theorem \ref{thm:direct}.

\begin{lem}[Lemma 1 in \cite{chong2015parameter}] \label{lem:state_error}
	Consider system \eqref{eq:system} and the state error system \eqref{eq:error_sys_general} under Assumption \ref{ass:obs_error_i}. There exist constants $\bar{k}$, $\bar{\lambda}>0$ such that for any $\Delta_{x}$, $\Delta_{\tilde{x}}$, $\Delta_u>0$, there exists a class $\mathcal{K}_{\infty}$ function $\bar{\gamma}_{\tilde{x}}$ such that for any $p$, $p^{\star}\in\Theta$, $(x(0),\tilde{x}(0))\in \Delta_{x}\mathcal{H}(0) \times \Delta_{\tilde{x}}\mathcal{H}(0)$ and $u\in\mathcal{M}_{\Delta_u}$, the solution to \eqref{eq:error_sys_general} satisfies
	\begin{equation}
			|\tilde{x}(t)| \leq \bar{k} \exp(-\bar{\lambda}t) |\tilde{x}(0)| + \bar{\gamma}_{\tilde{x}}(|\tilde{p}|), \qquad \forall t\geq 0,
	\end{equation}
	where $\tilde{p}:=p-p^{\star}$. \hfill $\Box$
\end{lem}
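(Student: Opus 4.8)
The plan is to read Lemma~\ref{lem:state_error} as a standard input-to-state-stability estimate extracted from the dissipation inequality in Assumption~\ref{ass:obs_error_i}, with the constant parameter mismatch $\tilde p$ playing the role of a frozen disturbance. The key observation is that along any solution the Lyapunov function $V$ obeys a scalar linear differential inequality whose forcing term becomes a $\tilde p$-dependent constant once the boundedness of $x$ and $u$ is exploited; integrating that inequality and converting back through the quadratic bounds on $V$ delivers the claimed estimate.

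First I would invoke Assumption~\ref{assum:bounded_x} to fix, for the given $\Delta_x,\Delta_u$, a constant $K_x=K_x(\Delta_x,\Delta_u)$ so that every solution of~\eqref{eq:system} starting in $\Delta_x\mathcal{H}(0)$ under $u\in\mathcal{M}_{\Delta_u}$ satisfies $|x(t)|\le K_x$ for all $t\ge 0$, while $|u(t)|\le \Delta_u$. Writing $V$ for the Lyapunov function attached to the given $\tilde p$, I differentiate $V(\tilde x(t))$ along~\eqref{eq:error_sys_general} and apply~\eqref{eq:lyap_iss_2} to obtain
\begin{equation}
\dot V(\tilde x(t)) \le -\lambda_0 V(\tilde x(t)) + \tilde\gamma(\tilde p, x(t), u(t)).
\end{equation}
Since $\tilde\gamma$ is continuous and $(x(t),u(t))$ remains in the compact set $K_x\mathcal{H}(0)\times\Delta_u\mathcal{H}(0)$, the forcing term is bounded above by
\begin{equation}
\bar\gamma(\tilde p) := \sup\{\tilde\gamma(\tilde p,z,\bar z) : |z|\le K_x,\ |\bar z|\le \Delta_u\},
\end{equation}
which is finite and vanishes at $\tilde p=0$ because $\tilde\gamma(0,\cdot,\cdot)\equiv 0$. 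Integrating $\dot V \le -\lambda_0 V + \bar\gamma(\tilde p)$ via the comparison lemma gives $V(\tilde x(t)) \le \exp(-\lambda_0 t)\,V(\tilde x(0)) + \bar\gamma(\tilde p)/\lambda_0$; sandwiching with~\eqref{eq:lyap_iss_1} and using $\sqrt{a+b}\le\sqrt a+\sqrt b$ then yields the desired form with $\bar k := \sqrt{a_2/a_1}$, $\bar\lambda := \lambda_0/2$, and $\bar\gamma_{\tilde x}$ proportional to $\sqrt{\bar\gamma(\cdot)}$. Note that $\bar k,\bar\lambda$ depend only on $a_1,a_2,\lambda_0$, consistent with their stated independence of the $\Delta$'s, whereas $\bar\gamma_{\tilde x}$ may depend on $K_x$ and hence on $\Delta_x,\Delta_u$.

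The step requiring the most care is upgrading $\bar\gamma$ into a genuine class $\mathcal{K}_\infty$ function of the scalar $|\tilde p|$. I would first pass to $\hat\gamma(s):=\sup\{\bar\gamma(\tilde p):\tilde p\in\tilde\Theta,\ |\tilde p|\le s\}$, which is nondecreasing, continuous, and zero at $s=0$; continuity of this supremum follows from a standard maximum-theorem argument since the constraint set does not vary with $s$. I would then overbound $\hat\gamma$ by a class $\mathcal{K}_\infty$ function, which is always possible for a continuous nonnegative function vanishing at the origin, and note that composing with $\sqrt{\cdot}$ and the positive constant $1/(a_1\lambda_0)$ preserves the class $\mathcal{K}_\infty$ property, producing $\bar\gamma_{\tilde x}$. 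This $\mathcal{K}_\infty$-majorisation is the only genuinely technical part of the argument; the remainder is the routine comparison-lemma computation. Since the statement is quoted verbatim as Lemma~1 of~\cite{chong2015parameter}, I would finally remark that the proof merely recapitulates that earlier result under the standing Assumptions~\ref{assum:bounded_x} and~\ref{ass:obs_error_i}.
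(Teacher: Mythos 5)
Your proof is correct and follows essentially the same route as the source: the paper gives no inline proof, importing the statement verbatim as Lemma~1 of \cite{chong2015parameter}, whose argument is exactly your comparison-lemma/ISS construction (bound the forcing term $\tilde\gamma$ over the compact set supplied by Assumption~\ref{assum:bounded_x} and $u\in\mathcal{M}_{\Delta_u}$, integrate the dissipation inequality \eqref{eq:lyap_iss_2}, convert through the quadratic bounds \eqref{eq:lyap_iss_1}, and majorise the resulting $\tilde p$-dependent bound by a class $\mathcal{K}_\infty$ function), with the same quantifier structure ($\bar k,\bar\lambda$ depending only on $a_1,a_2,\lambda_0$; $\bar\gamma_{\tilde x}$ on $\Delta_x,\Delta_u$). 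One cosmetic slip: the constraint set $\{\tilde p\in\tilde\Theta : |\tilde p|\le s\}$ in your definition of $\hat\gamma$ \emph{does} vary with $s$ (it is the inner supremum defining $\bar\gamma$ whose constraint set is fixed), but this is harmless since the $\mathcal{K}_\infty$ majorisation only requires $\hat\gamma$ to be nondecreasing, locally bounded and continuous at zero, all of which hold.
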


\begin{lem} \label{lem:mu}
	Consider system \eqref{eq:system}, the state error system \eqref{eq:error_sys_general}, for $i\in\{1,\dots,N\}$, $\mathbb{N}_{\geq 1}$ and the monitoring signal \eqref{eq:monitoring} under Assumption \ref{assum:bounded_x}, \ref{ass:obs_error_i} and \ref{ass:PE_y_tilde}. For any $\Delta_{x}$, $\Delta_{\tilde{x}}$, $\Delta_u>0$ and $\epsilon_{\mu}>0$, there exist class $\mathcal{K}_{\infty}$ functions $\bar{\alpha}$ and $\underline{\alpha}$ independent of $\epsilon_{\mu}$ and a $T=T(\Delta_{x}, \Delta_{\tilde{x}}, \Delta_u, \epsilon_{\mu})>0$ such that for all $p$, $p^{\star}\in\Theta$, $(x(0),\tilde{x}(0))\in \Delta_{x}\mathcal{H}(0) \times \Delta_{\tilde{x}}\mathcal{H}(0)$ and for any $u\in \mathcal{M}_{\Delta_u}$ such that Assumption \ref{ass:PE_y_tilde} holds, the monitoring signal \eqref{eq:monitoring} is locally Lipschitz in $p$ on $\Theta$ and satisfies the following for all $t'\geq 0$ 
		\begin{equation}
				\underline{\alpha}(|\tilde{p}|) \leq \mu(p,t',t) \leq \epsilon_{\mu} + \bar{\alpha}(|\tilde{p}|), \qquad \forall t\geq t' + T, \label{eq:mu_bound}
		\end{equation}
	where $\tilde{p}:=p-p^{\star}$. \hfill $\Box$
\end{lem}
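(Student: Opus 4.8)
The plan is to establish the two-sided bound \eqref{eq:mu_bound} and the local Lipschitz property separately, all resting on the observation that every signal entering the monitoring integral evolves in a compact set. First I would bound the output error pointwise. Writing $\tilde{y}(p,s) = h(\tilde{x}(s)+x(s),\tilde{p}+p^{\star}) - h(x(s),p^{\star})$ and splitting this difference into a state contribution $h(\tilde{x}+x,p)-h(x,p)$ and a parameter contribution $h(x,p)-h(x,p^{\star})$, the continuous differentiability of $h$ together with the uniform boundedness of $x$ (Assumption \ref{assum:bounded_x}) and of $\tilde{x}$ (Lemma \ref{lem:state_error}) yields a Lipschitz constant $L_h$ valid on the relevant compact set, giving $|\tilde{y}(p,s)|^{2} \leq 2L_h^{2}(|\tilde{x}(s)|^{2} + |\tilde{p}|^{2})$.

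For the upper bound I would substitute this estimate into \eqref{eq:monitoring} and invoke Lemma \ref{lem:state_error} in the shifted form $|\tilde{x}(s)| \leq \bar{k}\exp(-\bar{\lambda}(s-t'))|\tilde{x}(t')| + \bar{\gamma}_{\tilde{x}}(|\tilde{p}|)$ for $s\geq t'$. The resulting integrals split into a transient part proportional to $|\tilde{x}(t')|^{2}$ and a steady-state part. The steady-state terms, after using $\int_{t'}^{t}\exp(-\lambda(t-s))\,ds \leq 1/\lambda$, produce a class $\mathcal{K}_{\infty}$ function $\bar{\alpha}$ of $|\tilde{p}|$ assembled from $|\tilde{p}|^{2}$ and $\bar{\gamma}_{\tilde{x}}(|\tilde{p}|)^{2}$, which is independent of $\epsilon_{\mu}$. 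The transient part is a convolution of the kernels $\exp(-\lambda(t-s))$ and $\exp(-2\bar{\lambda}(s-t'))$, which vanishes as $t-t'\to\infty$; since $|\tilde{x}(t')|$ is uniformly bounded by a constant depending only on $\Delta_{\tilde{x}}$, $\Delta_{x}$, $\Delta_{u}$ and the diameter of $\tilde{\Theta}$, I can choose $T=T(\Delta_{x},\Delta_{\tilde{x}},\Delta_{u},\epsilon_{\mu})$ so that this contribution is at most $\epsilon_{\mu}$ for all $t\geq t'+T$.

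For the lower bound I would discard all but the final window of the integral: for $t\geq t'+T$ with $T\geq T_f$, restricting \eqref{eq:monitoring} to $[t-T_f,t]$ and bounding $\exp(-\lambda(t-s)) \geq \exp(-\lambda T_f)$ there gives $\mu(p,t',t) \geq \exp(-\lambda T_f)\int_{t-T_f}^{t} |\tilde{y}(p,s)|^{2}\,ds \geq \exp(-\lambda T_f)\,\alpha_{\tilde{y}}(|\tilde{p}|)$ by the persistency of excitation condition (Assumption \ref{ass:PE_y_tilde}). Setting $\underline{\alpha}(\cdot) := \exp(-\lambda T_f)\,\alpha_{\tilde{y}}(\cdot)$, which is class $\mathcal{K}_{\infty}$ and independent of $\epsilon_{\mu}$, settles this direction; one takes $T$ to be the larger of the value from the upper bound and $T_f$.

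Finally, for local Lipschitzness in $p$, I would use that the observer state $\hat{x}(p,s)$ depends Lipschitz-continuously on $p$ over the finite interval $[t',t]$, a standard consequence of the continuous differentiability of $\hat{f}$ and the smooth dependence of ODE solutions on parameters, so that $\tilde{y}(p,s)$, and hence $|\tilde{y}(p,s)|^{2}$ (the map $z\mapsto z^{2}$ being Lipschitz on the bounded range of $\tilde{y}$), is Lipschitz in $p$; integrating against the bounded kernel preserves this. The main obstacle is the upper bound: one must verify that the transient contribution can be dominated by $\epsilon_{\mu}$ uniformly in $t'$, in $p$ and over the admissible initial conditions, while simultaneously keeping $\bar{\alpha}$ and $\underline{\alpha}$ free of $\epsilon_{\mu}$. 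This is precisely where the uniform boundedness of $|\tilde{x}(t')|$ and the clean separation between the transient and steady-state scales in the convolution are essential.
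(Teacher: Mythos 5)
Your proposal is correct, and it is essentially a self-contained reconstruction of what the paper outsources. For the two-sided bound \eqref{eq:mu_bound}, the paper's own proof consists of a single citation to Lemma 2 of \cite{chong2015parameter}; you instead prove it directly, and your argument has exactly the structure of that cited result: the upper bound by splitting $\tilde{y}$ into a state and a parameter contribution, invoking the Lipschitzness of $h$ on the relevant compact set together with the exponential bound of Lemma \ref{lem:state_error}, and separating the exponentially weighted integral into a transient part (dominated by $\epsilon_{\mu}$ for $t \geq t'+T$, using the uniform bound on $|\tilde{x}(t')|$) and a steady-state part giving $\bar{\alpha}$; the lower bound by restricting to the window $[t-T_f,t]$, bounding the kernel below by $e^{-\lambda T_f}$, and applying the persistency of excitation condition of Assumption \ref{ass:PE_y_tilde}, so that $\underline{\alpha} = e^{-\lambda T_f}\alpha_{\tilde{y}}$. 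Your bookkeeping of dependencies is also right: only $T$ depends on $\epsilon_{\mu}$, while $\bar{\alpha}$ and $\underline{\alpha}$ do not. One small point you gloss over is that applying Lemma \ref{lem:state_error} in shifted form from time $t'$ requires enlarging the initial-condition sets (since $|x(t')|\leq K_x$ and $|\tilde{x}(t')|$ need not lie in $\Delta_x\mathcal{H}(0)\times\Delta_{\tilde{x}}\mathcal{H}(0)$), but this is benign because those bounds depend only on $\Delta_x$, $\Delta_{\tilde{x}}$, $\Delta_u$ and the diameter of $\tilde{\Theta}$. For the local Lipschitz property, which is the only part the paper proves in situ, your route coincides in substance with the paper's: the paper augments the observer dynamics and the scalar equation $\dot{\mu} = -\lambda\mu + |\tilde{y}|^2$ with $\dot{p}=0$ and applies the differentiability theorem of \cite[Chapter 4.6]{arnold1992ordinary} twice, concluding $\mu$ is continuously differentiable (hence locally Lipschitz) in $p$; you apply smooth dependence once, to $\hat{x}(p,\cdot)$, and then propagate the Lipschitz estimate by hand through $h$, the squaring map, and the bounded kernel. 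Both are valid; the paper's version is slightly slicker, while yours makes explicit that the resulting Lipschitz constant may grow with $t$, which is all the lemma claims.
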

\begin{proof}
Property \eqref{eq:mu_bound} was proved in Lemma 2 in \cite{chong2015parameter}. We now proceed with proving that $\mu(p,t',t)$ is locally Lipschitz in $p$ on $\Theta$, for all $t\geq t'$. First, note that the monitoring signal in \eqref{eq:monitoring} can be written as 
\begin{equation}
\frac{d\mu(p,t',t)}{dt}= -\lambda \mu(p, t', t) + |\tilde{y}(p,t)|^2, \; \mu(p,t',t')=0. \label{eq:monitoring_2}
\end{equation} 
Also, since $h$ is continuously differentiable, $\tilde{y}(p,t)$ is continuously differentiable in $p$ if $\hat{x}(p,t)$ is. To this end, we augment the $\hat{x}(p,t)$-system, i.e. $\dot{\hat{x}(p,t)} = \hat{f}(\hat{x},p,u,y)$, with 
\begin{equation}
 \dot{p}=0, \qquad p(t')=p. \label{eq:p_system}
\end{equation} 
Since $\hat{f}$ is continuously differentiable, the right-hand side of the augmented system is continuously differentiable in $p$. Under these conditions, we conclude using the differentiability theorem in  \cite[Chapter 4.6]{arnold1992ordinary} that $\hat{x}(p,t)$ is a continuously differentiable function of its initial conditions $\hat{x}(p,t')$ and $p(0)=p$. 

Therefore, we have that $\tilde{y}(p,t)$ is continuously differentiable in $p$. By noting that the system \eqref{eq:monitoring_2} augmented with system \eqref{eq:p_system} has a continuously differentiable right-hand side, we conclude again using differentiability theorem in \cite[Chapter 4.6]{arnold1992ordinary} with similar arguments as before that $\mu(p,t',t)$ is locally Lipschitz in $p$ on $\Theta$.
\end{proof}

We are now ready to prove Theorem \ref{thm:direct}. Let $\Delta_x$, $\Delta_{\tilde{x}}$, $\Delta_u$, $\eta$ and $d^{\star}> 0$,
\begin{itemize}
		\item $k^{\star}:=3^{n_p -1}\left(\frac{3^{n_p(i+1)}-1}{3^{n_p}-1}\right)$ with $i$ satisfying $\frac{(n_p 3^{-2i})^{\frac{1}{2}}}{2} \leq  d^{\star}$ from Lemma \ref{lem:direct_rect}.
		\item Choose $\epsilon_{\mu}>0$ sufficiently small such that 
				\begin{equation}
						\underline{\alpha}^{-1}(2\epsilon_{\mu}) \leq \min\left\{ \eta, \frac{1}{2} \bar{\gamma}_{\tilde{x}}^{-1}(\eta) \right\}, \label{eq:thm_step}
				\end{equation}
				where $\underline{\alpha}$ and $\bar{\gamma}_{\tilde{x}}$ are a class $\mathcal{K}_{\infty}$ functions from Lemmas \ref{lem:mu} and \ref{lem:state_error}, respectively. Then, generate $T>0$ from Lemma \ref{lem:mu}. 
\end{itemize} 

Let $(x(0),\tilde{x}(0))\in \Delta_{x}\mathcal{H}(0) \times \Delta_{\tilde{x}}\mathcal{H}(0)$ and any $u\in\mathcal{M}_{\Delta_u}$. We consider any $t\geq 0$ as the systems \eqref{eq:system} and \eqref{eq:error_sys_general} are forward complete in view of Assumptions \ref{assum:bounded_x} and \ref{ass:obs_error_i}.  

Recall that the monitoring signal \eqref{eq:monitoring} satisfies the following by definition of the selection criterion in \eqref{eq:decision_monitor}, and consider the time interval $t\in[t_k, t_{k+1})$ with $k\geq k^{\star}$,
\begin{equation}
		\mu_{\sigma(t)}(t) = \underset{i}{\min} \; \mu_{i}(t) \leq \mu_{i^*}(t), \label{eq:mu_min_1}
\end{equation}
where $i^* \in \underset{i\in\{1,\dots,N_{\hat{\Theta}(k)}\}}{\arg \min} |p_i-p^{\star}|$. Combining with \eqref{eq:mu_min_1} with Lemma \ref{lem:mu}, 
\begin{align}
		\underline{\alpha}(|\tilde{p}_{\sigma(t)}(t)|) \leq & \mu_{\sigma(t)}(t)  \leq \mu_{i^*}(t) \leq \epsilon_{\mu}  + \bar{\alpha}(|\tilde{p}_{i^*}|), \nonumber \\ & \qquad \forall t\in[t_k+T, t_{k+1}),\, k\geq k^{\star}. \label{eq:mu_min_ineq}
\end{align}

By Lemma \ref{lem:direct_rect}, we have that $d(p^{\star},\hat{\Theta}(k)) \leq d^{\star}$, for all $k\geq k^{\star}$. Hence, we obtain for all $t \geq t_{k^*}+T$,
\begin{equation}
		|\tilde{p}_{\sigma(t)}(t)| \leq \underline{\alpha}^{-1} (\epsilon_{\mu}+ \bar{\alpha}(d^{\star})) \leq \underline{\alpha}^{-1}(2\epsilon_{\mu}) + \underline{\alpha}^{-1}(2\bar{\alpha}(d^{\star})), \label{eq:tilde_sigma}
\end{equation}
where the last inequality is obtained using the relation for any class $\mathcal{K}_{\infty}$ function $\alpha$, we have that $\alpha(a + b) \leq \alpha(2a) + \alpha(2b)$, for all $a$, $b\in\mathbb{R}_{\geq 0}$. Moreover, Lemma \ref{lem:state_error} gives the following bound on the chosen state estimation error
\begin{equation}
		|\tilde{x}_{\sigma(t)}(t)| \leq \bar{k} \exp(-\bar{\lambda}t) |\tilde{x}(0)| + \bar{\gamma}_{\tilde{x}}(|\tilde{p}_{\sigma(t)}|), \qquad \forall t\geq 0, \label{eq:tilde_x}
\end{equation}
where $\bar{\gamma}_{\tilde{x}}$ is a class $\mathcal{K}_{\infty}$ function. Therefore, we obtain \eqref{eq:conc} using \eqref{eq:tilde_sigma}, \eqref{eq:thm_step} and \eqref{eq:tilde_x}, by letting  $\nu_{\tilde{p}}(r):=\underline{\alpha}^{-1}(2\bar{\alpha}(r))$, $\gamma_{\tilde{x}}(r):=\bar{\gamma}_{\tilde{x}}(2 \nu_{\tilde{p}}(r))$ and $T^{\star}:=t_{k^*}+T$.  \hfill $\Box$


\section{Numerical simulations: a neural mass model} \label{sec:example}
We apply the setup on a neural mass model in \cite{jansen1995eeg}. This model describes the dynamics of the mean membrane potential (states) and the synaptic gain (parameters) of neuron populations, which is used to capture the elecroencephalogram (measurement) patterns related to various brain activities. By taking the states to be $x=(x_{11}, x_{12}, x_{21}, x_{22}, x_{31}, x_{32})\in\mathbb{R}^{6}$ and the \emph{unknown} parameter vector to be $p^{\star}=(p_{1}^{\star},p_{2}^{\star})\in\mathbb{R}^{2}$, which is known to reside in a compact set $\Theta:=[2,8]\times[22,28]$ which we normalise to a unit hypercube. Its dynamics is given by
\begin{align}
	\dot{x} & = A x + G(p^{\star}) \gamma(Hx) + B(p^{\star})\phi(u,y), \; y=Cx,	\label{eq:eg_dynamics}
\end{align}
where $A = \textrm{diag}(A_a, A_a, A_b)$, where $A_a = \begin{bmatrix} 0 & 1 \\ -a^2 & -2a \end{bmatrix}$ and $A_b = \begin{bmatrix} 0 & 1 \\ -b^2 & -2b \end{bmatrix}$, $G(p) = \begin{bmatrix} 0_{3\times 1} & 0_{3\times 1} \\ p_1 a c_2 & 0 \\ 0 & 0 \\ 0 & p_2bc_4 \end{bmatrix}$, $B(p)=\begin{bmatrix} 0 & 0 \\ p_1a & 0 \\ 0 & 0 \\ 0 & p_1 a \\ 0_{2\times 1} & 0_{2\times 1} \end{bmatrix}$, $H = \begin{bmatrix} c_1 & 0_{1\times 5} \\ c_3 & 0_{1\times 5} \end{bmatrix}$ and $C=\begin{bmatrix}0 & 0 & 1 & 0 & -1 & 0\end{bmatrix}$. We have used the notation $0_{r\times s}$ to denote an $r$ by $s$ matrix with all $0$ entries. The known parameters are $a=100$, $b=50$, $c_1=135$, $c_2=108$, $c_3=33.75$, $c_4=33.75$ are assumed to be known. The nonlinear terms are $\gamma = (S,S)$ and $\phi(u,y)=(S(y),u)$, where $S(v):=2e_0/(1+e^{r(v_0-v)})$, for $v\in\mathbb{R}$, with known constants $e_0=2.5$, $v_0=6$, $r=0.56$.

This model satisfies Assumption \ref{assum:bounded_x} because the matrix $A$ is Hurwitz and the nonlinear terms only contain a bounded input $u\in\mathcal{M}_{\Delta_u}$ and the function $S$ which is bounded. By also noting that the  $S$ is slope-restricted, we can employ Proposition 4 in \cite{chong2015parameter} to design our multi-observer \eqref{eq:multiobs} to satisfy Assumption \ref{ass:obs_error_i}. See \cite[Section VI-B]{chong2015parameter} for details.

With  $d^{\star}=0.8$, we calculate according to Lemma \ref{lem:direct_rect} that the termination iteration of the DIRECT algorithm is $k^*=6$. Other parameters for the algorithms are $\lambda = 0.05$ in \eqref{eq:monitoring}, sampling interval $T_d = 10$s and DIRECT search parameter $\epsilon=10^{-5}$. The results obtained are shown in Figures \ref{fig:res} and \ref{fig:res_error}. 

To compare DIRECT with the previous dynamic sampling scheme in \cite{chong2015parameter}, we ran both algorithms for $t_f=100$s with update instants at $t_k$, $k\in\{0,1,\dots,9\}$ and sampling interval $T_d=10$s. We compare the convergence time for a desired margin of the parameter estimation error, the average number of observers used during simulation time $[0,100]$s, the parameter estimation error at the end of the run-time $t_f$ as well as the corresponding normalised state estimation error.  We summarise this comparison in Table \ref{table:compare}, which shows that DIRECT achieves a significantly better parameter estimation accuracy. Namely, given the desired accuracy, DIRECT requires a fewer number of observers on average compared to the dynamic sampling policy in \cite{chong2015parameter}. The average number of observers used for DIRECT decreases as the run-time $t_f$ is increased since only one observer is used from $t=k^*T_d$ onwards, while the average number of observers used remains constant for the  dynamic sampling policy in \cite{chong2015parameter}. However, this trend does not necessarily translate to the state estimation error, due to the fact that the parameter mismatch gain function $\bar{\gamma}_{\tilde{x}}$ (c.f. Lemma \ref{lem:state_error}) can differ between observers.

\begin{figure*}[h!]
	\begin{center}
		\includegraphics[width=5.5cm]{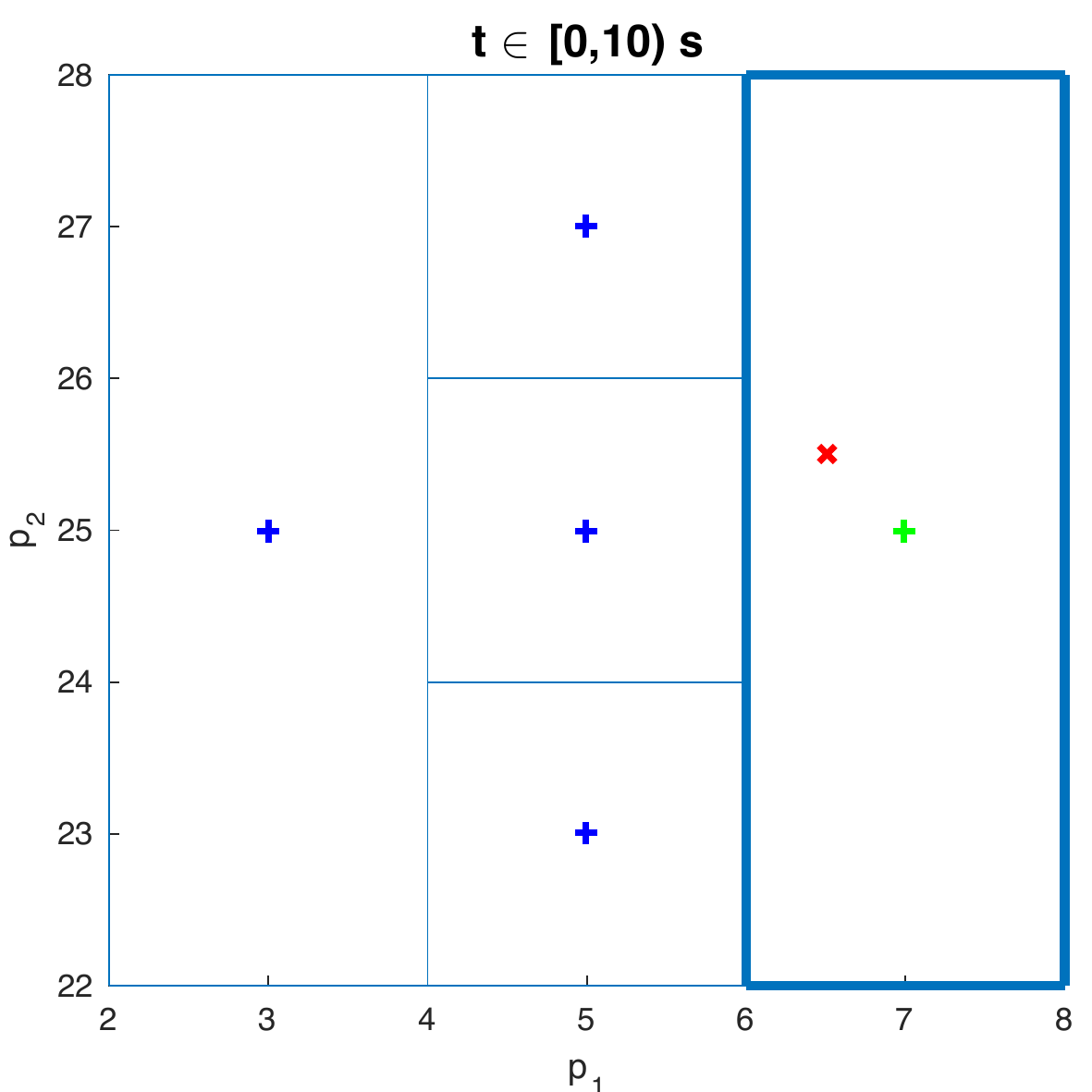}
		\includegraphics[width=5.5cm]{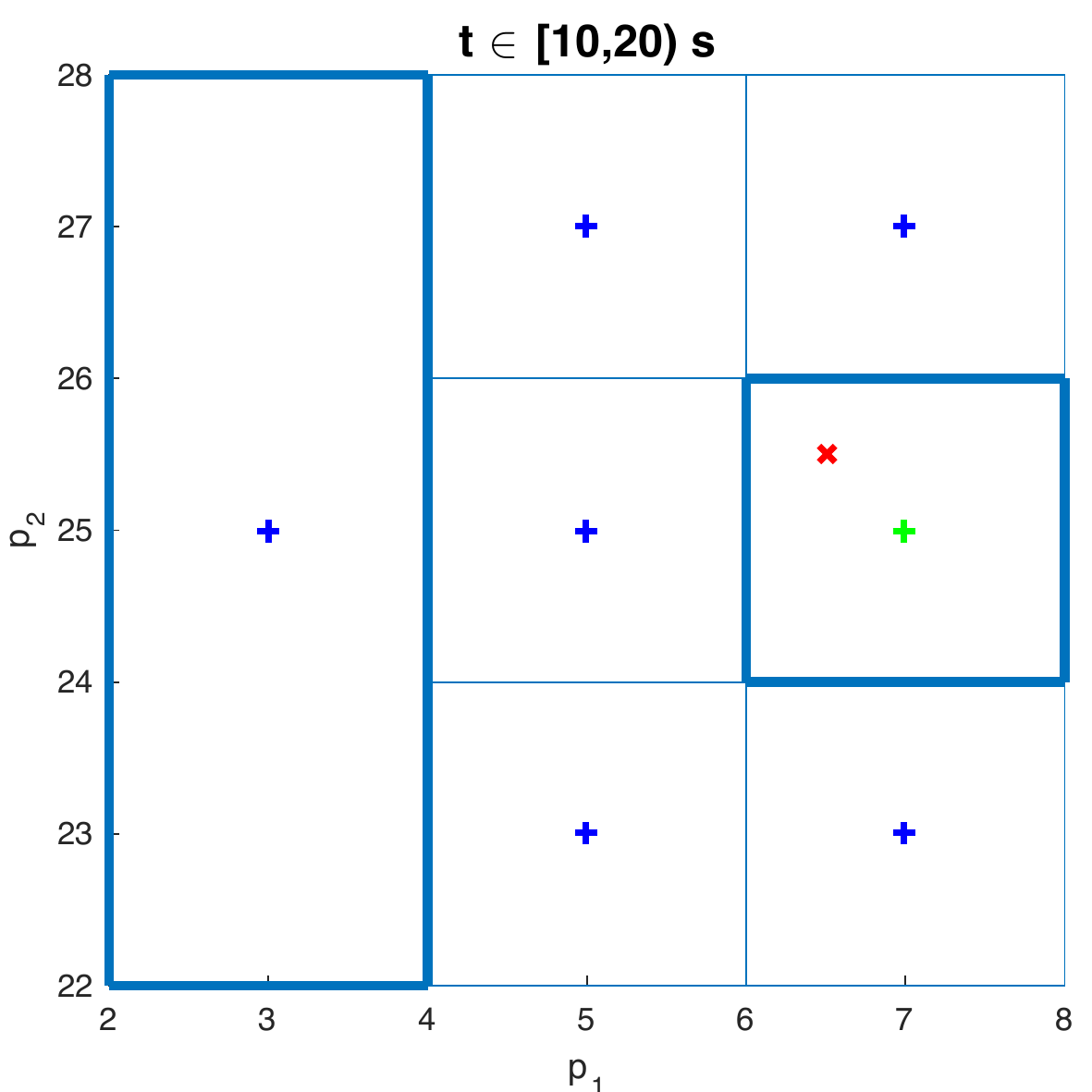}
		\includegraphics[width=5.5cm]{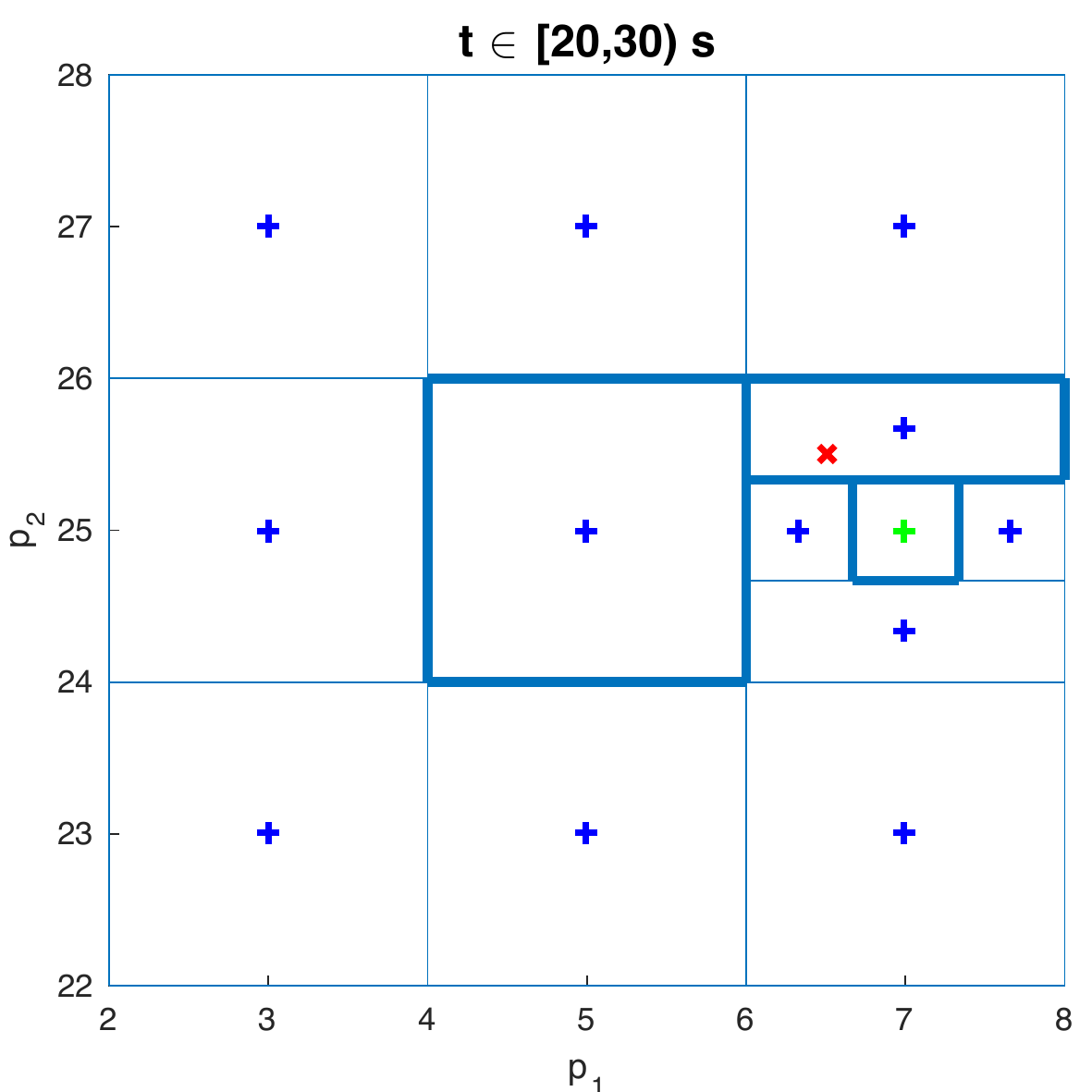} 
		\includegraphics[width=5.5cm]{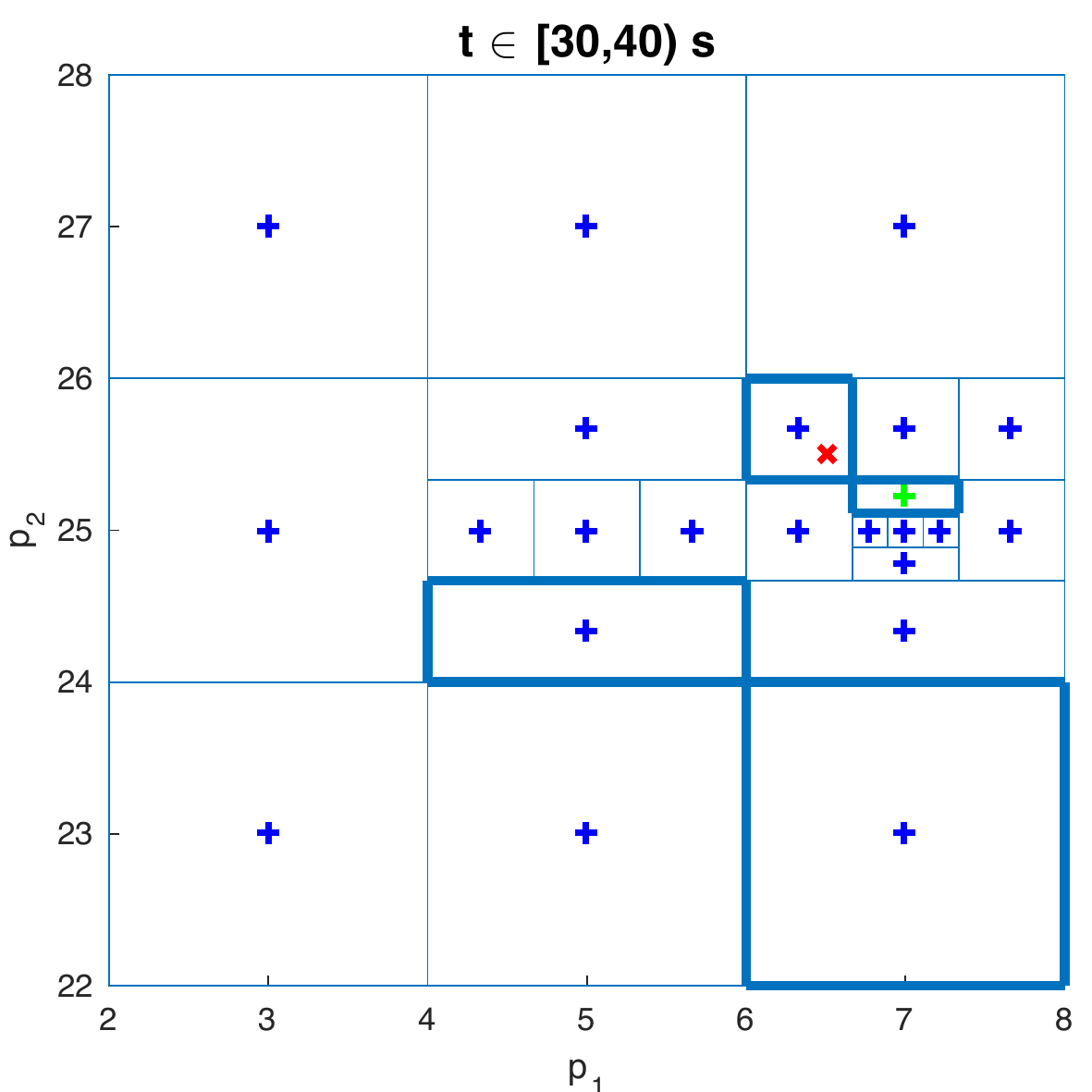}
		\includegraphics[width=5.5cm]{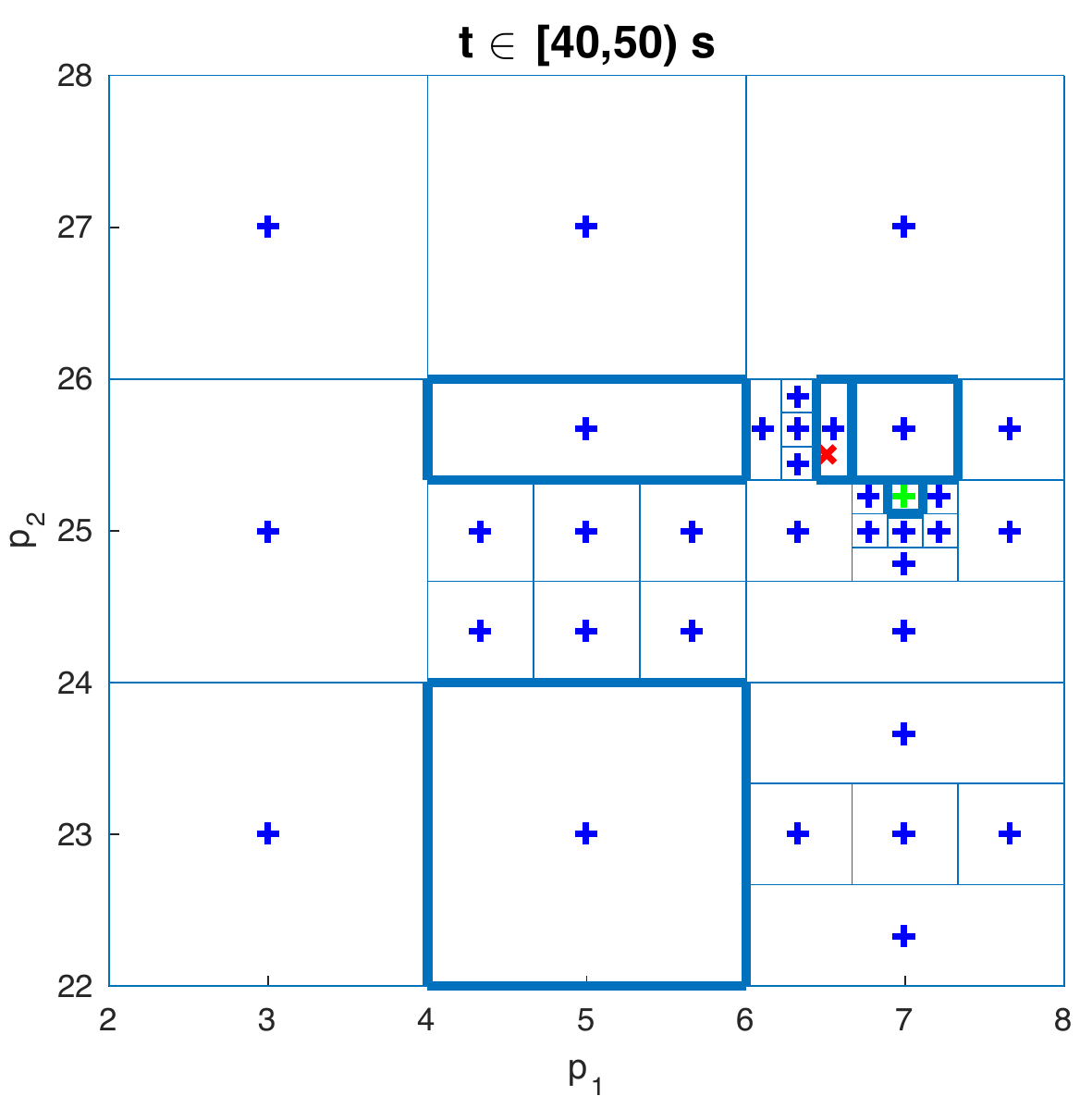}
		\includegraphics[width=5.5cm]{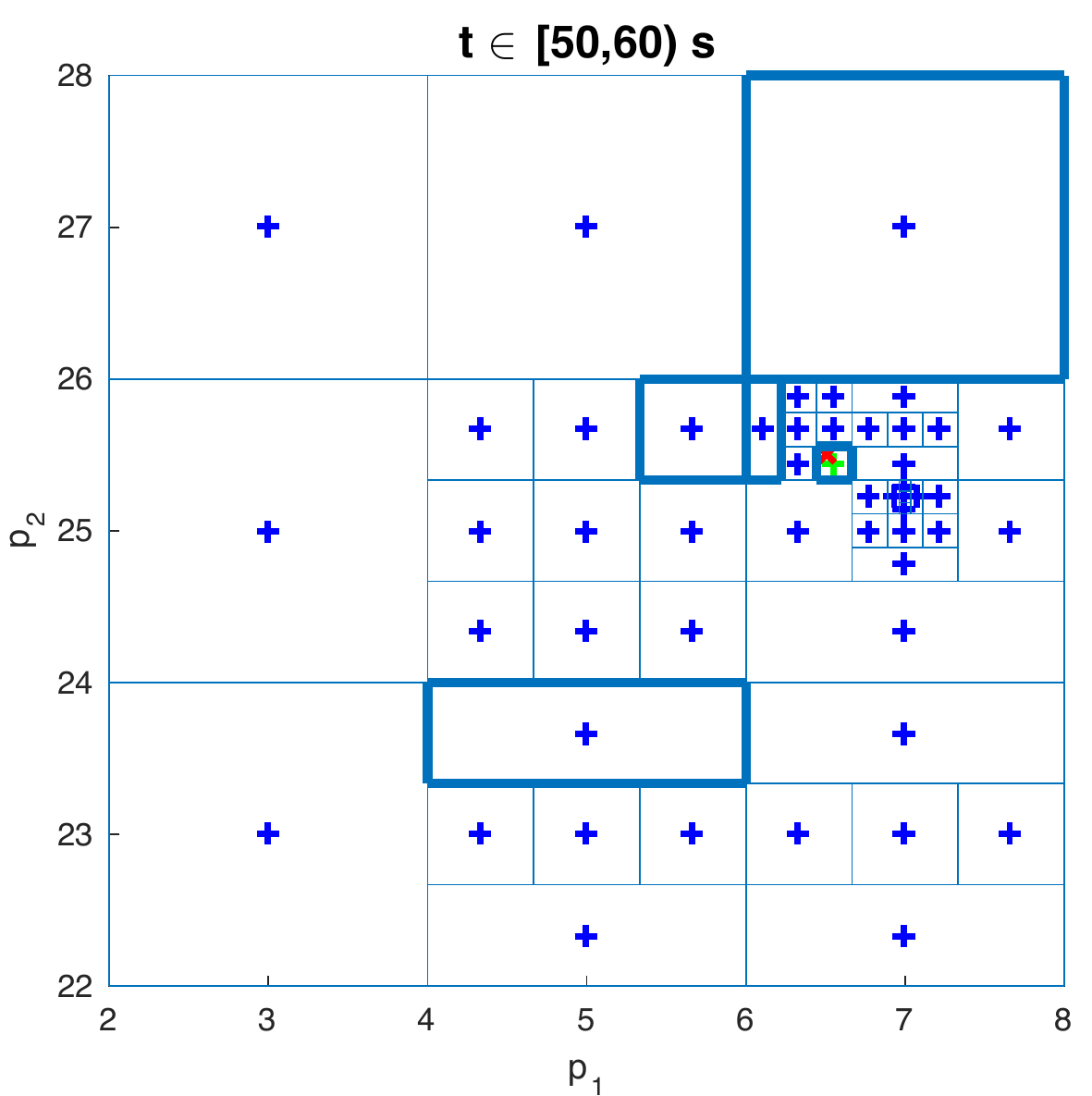}
	\end{center} \vspace{-1em}
	\caption{Legend: {\color{red}$\times$} is the true parameter $p^{\star}$; {\color{blue}$+$} are the sampled parameters;  {\color{green} $+$} is the final selected parameter in the interval $t\in[t_k,t_{k+1})$; the potentially optimal hyperectangles at each iteration is highlighted with thicker lines. \label{fig:res}  }
\end{figure*}

\begin{figure}[h!]
	\begin{center}
		\includegraphics[width=8.5cm]{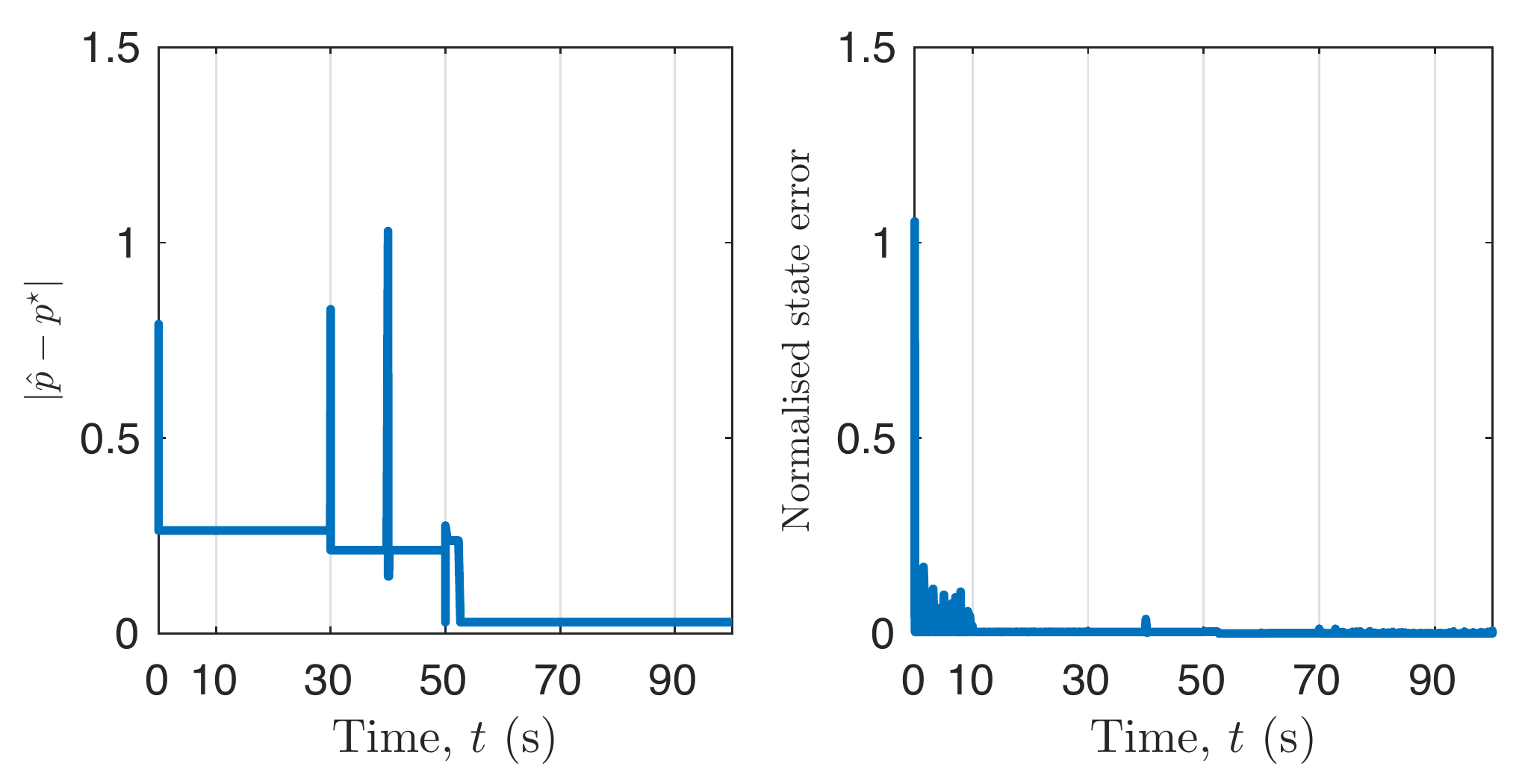}
	\end{center} \vspace{-2em}
	\caption{Parameter error $|\hat{p}(t)-p^{\star}|$ and normalised state estimation error $\frac{|\tilde{x}_{\sigma(t)}(t)|}{\max_{t} |x(t)| - \min |x(t)|}$. \label{fig:res_error}} \vspace{-1em}
\end{figure}

\begin{table}[h!] 
	\begin{center}
\begin{tabular}{p{3.8cm} p{1.5cm} p{1.9cm}}
	
	\toprule
	 & \textbf{DIRECT}  & \textbf{Dynamic policy in \cite{chong2015parameter}} \\
	\midrule
	\textbf{Convergence time $T^\star$ such that $|\tilde{p}_{\sigma(t)}(t)|\leq 0.72$, $\forall t \geq T^\star$.} & $45$s & $90$s \\
	\hline
	\textbf{Average number of observers  $$\frac{\sum_{k=0}^{9} N(t_{k}) }{\lceil t_{f}/T_d\rceil} $$} \newline where $N(t_k)$ is the number of observers used at $t\in[t_{k},t_{k+1})$, $k\in\{0,\dots,9\}$.   & $13.8$ & $16$ \\
	\hline
	\textbf{Parameter estimation error $|\tilde{p}_{\sigma(t_f)}(t_f)|$} & $0.03$ & $1.04$ \\
	\hline
	\textbf{Normalised state estimation error \newline  $$\frac{|\tilde{x}_{\sigma(t_{f})}(t_{f})|}{\underset{t\in[0,t_f]}{\max}|x(t)|-\underset{t\in[0,t_f]}{\min}|x(t)|}$$} & $0.0043$  & $0.0025$ \\
	\bottomrule
\end{tabular}
	\end{center}
	\caption{Numerical results comparing the supervisory observer with DIRECT vs. the previous dynamic policy in \cite{chong2015parameter} with $t_f=100$s.    \label{table:compare} }	
\end{table}

\section{Conclusions and future work} \label{sec:conc}
We have used the DIRECT optimisation algorithm to generate the samples needed to implement a supervisory observer, as proposed in \cite{chong2015parameter}. By doing so, we are able to overcome one of the issues of \cite{chong2015parameter}, which is the selection of the number of samples. Indeed, DIRECT automatically generates samples in the parameter set to improve estimation and it stops iterating after a given time, which is easy to compute. Afterwards, a single observer is implemented, which helps ease the computational complexity of \cite{chong2015parameter}. Results have been illustrated on a numerical example of a neural mass model. Future work includes providing robustness guarantees with respect to measurement noise and unmodelled dynamics.


\bibliographystyle{plain}
\bibliography{supervisory-direct.bib}

\begin{thebibliography}{10}

\bibitem{aguiar2008identification}
A~P. Aguiar, V.~Hassani, A.M. Pascoal, and M.~Athans.
\newblock Identification and convergence analysis of a class of continuous-time
  multiple-model adaptive estimators.
\newblock {\em IFAC Proceedings Volumes}, 41(2):8605--8610, 2008.

\bibitem{anderson2000multiple}
B.D.O. Anderson, T.S. Brinsmead, F.~De~Bruyne, J.~Hespanha, D.~Liberzon, and
  A.S. Morse.
\newblock Multiple model adaptive control. part 1: Finite controller coverings.
\newblock {\em International Journal of Robust and Nonlinear Control},
  10(11-12):909--929, 2000.

\bibitem{anderson1979optimal}
B.D.O. Anderson and J.B. Moore.
\newblock {\em {Optimal filtering}}.
\newblock Prentice-Hall Englewood Cliffs, NJ, 1979.

\bibitem{arnold1992ordinary}
V.I. Arnold.
\newblock {\em Ordinary Differential Equations}.
\newblock Springer-Verlag, 1992.

\bibitem{arulampalam2002tutorial}
M.S. Arulampalam, S.~Maskell, N.~Gordon, and T.~Clapp.
\newblock A tutorial on particle filters for online nonlinear/non-gaussian
  bayesian tracking.
\newblock {\em IEEE Transactions on Signal Processing}, 50(2):174--188, 2002.

\bibitem{besancon2007nonlinear}
G.~Besan{\c{c}}on.
\newblock {\em {Nonlinear observers and applications}}.
\newblock Lecture notes in control and information sciences. Springer, 2007.

\bibitem{bitmead1984persistence}
R.~Bitmead.
\newblock Persistence of excitation conditions and the convergence of adaptive
  schemes.
\newblock {\em IEEE Transactions on Information Theory}, 30(2):183--191, 1984.

\bibitem{chong2015parameter}
M.~S. Chong, D.~Ne{\v{s}}i{\'c}, R.~Postoyan, and L.~Kuhlmann.
\newblock Parameter and state estimation of nonlinear systems using a
  multi-observer under the supervisory framework.
\newblock {\em IEEE Transactions on Automatic Control}, 60(9):2336--2349, 2015.

\bibitem{finkelcode}
D.E. Finkel.
\newblock {DIRECT: Research and codes}, January 2004.
\newblock \url{http://www4.ncsu.edu/\textasciitilde ctk/Finkel\_Direct/}.

\bibitem{gablonsky2001thesis}
J.M. Gablonsky.
\newblock {\em Modifications of the DIRECT algorithm}.
\newblock PhD thesis, North Carolina State University, 2001.

\bibitem{jansen1995eeg}
B.H. Jansen and V.G. Rit.
\newblock Electroencephalogram and visual evoked potential generation in a
  mathematical model of coupled cortical columns.
\newblock {\em Biological cybernetics}, 73(4):357--366, 1995.

\bibitem{jones1993lipschitzian}
D.R. Jones, C.D. Perttunen, and B.E. Stuckman.
\newblock {Lipschitzian optimization without the Lipschitz constant}.
\newblock {\em Journal of Optimization Theory and Applications},
  79(1):157--181, 1993.

\bibitem{khong2013multidimensional}
S.Z. Khong, D.~Ne{\v{s}}i{\'c}, C.~Manzie, and Y.~Tan.
\newblock {Multidimensional global extremum seeking via the DIRECT optimisation
  algorithm}.
\newblock {\em Automatica}, 49(7):1970--1978, 2013.

\bibitem{li1996multiple}
X.~Li and Y.~Bar-Shalom.
\newblock Multiple-model estimation with variable structure.
\newblock {\em IEEE Transactions on Automatic Control}, 41(4):478--493, 1996.

\bibitem{liberzon2003switching}
D.~Liberzon.
\newblock {\em {Switching in systems and control}}.
\newblock Springer, 2003.

\bibitem{narendra2011changing}
K.S. Narendra and Z.~Han.
\newblock The changing face of adaptive control: the use of multiple models.
\newblock {\em Annual Reviews in Control}, 35(1):1--12, 2011.

\bibitem{sheldon1993optimizing}
S.~N. Sheldon and P.~S. Maybeck.
\newblock An optimizing design strategy for multiple model adaptive estimation
  and control.
\newblock {\em IEEE Transactions on Automatic Control}, 38(4):651--654, Apr
  1993.

\bibitem{wan2000unscented}
E.A. Wan and R.~Van Der~Merwe.
\newblock The unscented kalman filter for nonlinear estimation.
\newblock In {\em Proceedings of the IEEE Adaptive Systems for Signal
  Processing, Communications, and Control Symposium.}, pages 153--158. Ieee,
  2000.

\end{thebibliography}

\end{document}